\documentclass[a4paper,12pt]{amsart}

\usepackage[english]{babel}
\usepackage{tikz}
\usetikzlibrary{positioning,arrows,cd}

\usepackage{amsmath,amssymb,amsthm}

\usepackage{mathdots}

\usepackage{enumitem}

\usepackage{verbatim}

\usepackage{xcolor}

\newtheorem{theorem}{Theorem}[section]
\newtheorem{proposition}[theorem]{Proposition}
\newtheorem{lemma}[theorem]{Lemma}
\newtheorem{corollary}[theorem]{Corollary}

\newtheorem{question}[theorem]{Question}

\theoremstyle{definition}
\newtheorem{definition}[theorem]{Definition}

\theoremstyle{remark}
\newtheorem{example}[theorem]{Example}				
\newtheorem{remark}[theorem]{Remark}

\numberwithin{equation}{section}

\newcommand{\Ann}{\operatorname{Ann}}
\newcommand{\Aut}{\operatorname{Aut}}

\newcommand{\car}{\operatorname{char}}
\renewcommand{\wr}{\operatorname{wr}}
\newcommand{\sr}{\operatorname{sr}}

\newcommand{\F}{{\mathbb F}}

\usepackage[pdfpagelabels]{hyperref}

\usepackage{colortbl}
\DefineNamedColor{named}{RoyalBlue}     {cmyk}{1,0.50,0,0}
\DefineNamedColor{named}{BrickRed}      {cmyk}{0,0.89,0.94,0.28}

\newcommand{\viru[1]}{\textcolor{BrickRed}{\textbf{Antonio:} #1}}
\newcommand{\ra[1]}{\textcolor{blue}{\textbf{Ram\'on:} #1}}
\newcommand{\cris[1]}{\textcolor{RoyalBlue}{\textbf{Cristina:} #1}}

\AtBeginDocument{%
   \def\MR#1{}
}

\begin{document}

\title{Commutative algebras are ideals of evolution algebras}
%    Information for first author
\author[C.\ Costoya]{Cristina Costoya}
\address{CITMAga, Departamento de Matem\'aticas,
Universidade de Santiago de Compostela, 15782-Santiago de Compostela, Spain.}
\email{cristina.costoya@usc.es}
%    \thanks will become a 1st page footnote.
%    Information for second author
\author[A. Fernández Ouaridi]{Amir Fernández Ouaridi}
\address{Departamento de Geometría y Topología, Universidad de Sevilla, Sevilla, Spain.}
\email{amir.fernandez.ouaridi@gmail.com}

\author{Antonio Viruel}
\address{Departamento de \'Algebra, Geometr{\'\i}a y Topolog{\'\i}a, Universidad de M{\'a}laga, 29071-M{\'a}laga, Spain}
\email{viruel@uma.es}
\thanks{Authors were partially supported by grant PID2023-149804NB-I00 funded by MCIN/AEI/
10.13039/501100011033.}

\subjclass{05C25, 17A36, 17D92}
\keywords{Evolution algebra, commutative algebra, embedding, Waring rank}

\begin{abstract}
We prove that every finite-dimensional commutative algebra can be embedded as an ideal of a finite-dimensional evolution algebra. We then determine the smallest possible dimension of an evolution algebra admitting such an embedding and study some structural properties preserved by the construction. Over fields of characteristic different from two, we relate this minimal dimension to simultaneous Waring decompositions of the quadratic map $x\mapsto x^2$. In particular, for unital commutative algebras, the smallest possible dimension of an evolution algebra containing the given algebra coincides with the Waring rank of its squaring map. As an application, we determine this minimal dimension for the truncated polynomial algebras $\mathbb{F}[t]/(t^n)$. Finally, we study embeddings for which additional structure is preserved, including automorphisms and multiplicative bases.
\end{abstract}

\maketitle

\section{Introduction}

Evolution algebras were introduced by Tian and Vojt\v{e}chovský in connection with non-Mendelian genetics \cite{TV06}, and their systematic algebraic study was subsequently developed by Tian \cite{Tian}. Since then, they have attracted considerable attention from a purely algebraic perspective, leading to a substantial structure theory involving, among other topics, nilpotency \cite{EL15, EL16}, subalgebras, ideals and decompositions \cite{BCS22, CSV16}, automorphisms \cite{EL19, chinos,CLTV22},  and classification \cite{CFK24, Class2dim}; see also \cite{idrees, GP25}.

The defining feature of an evolution algebra is the existence of a particularly simple type of basis. More precisely, an algebra $E$ over a field $\mathbb{F}$ is called an \emph{evolution algebra} if it admits a basis $\mathcal{B}=\left\{e_i\;\, i\in I\right\}$, called a \emph{natural basis}, such that
$$
e_i e_j=0 \qquad \text{whenever } i\neq j.
$$
Evolution algebras are therefore commutative, but in general not associative. Also, relative to a natural basis, the multiplication is completely determined by the squares of the basis elements. Despite the simplicity of this defining condition, evolution algebras exhibit a rich algebraic structure, and the distinguished role of a natural basis allows algebraic properties to be studied through linear algebra and combinatorial methods, in particular through the associated directed graph.

The tension between the rigidity imposed by a natural basis and the class of algebraic structures that may occur inside an evolution algebra is central to the present work. On the one hand, determining whether a given finite-dimensional commutative algebra is itself an evolution algebra is a restrictive recognition problem, which can be formulated in terms of simultaneous diagonalization by congruence of a family of symmetric matrices associated with the multiplication \cite{BMV20}. On the other hand, subalgebras of evolution algebras need not themselves be evolution algebras \cite{CSV16}, so passing to subalgebras allows substantially more general commutative algebraic structures to appear.  This behaviour may change significantly under additional assumptions on the ambient algebra; for instance, regular evolution algebras, those satisfying $E^2=E$, are closed under taking subalgebras \cite{LP25}.

It is precisely the gap between being an evolution algebra and occurring inside one that provides the starting point for this work. Our initial observation, and the main motivation for the developments that follow, is that this gap is as large as possible in finite dimension: every finite-dimensional commutative algebra over $\mathbb{F}$ can be embedded as an ideal of a finite-dimensional evolution algebra. The proof is constructive and yields a uniform finite-dimensional realization. This universality phenomenon is somewhat surprising: although the existence of a natural basis imposes a highly constrained form on the multiplication of an evolution algebra itself, no restriction remains on the finite-dimensional commutative algebras that may occur as its ideals.

Once existence is established, the problem naturally shifts to optimality, namely to determining the smallest possible dimension of an evolution algebra admitting such an embedding. We study this dimension and, over fields of characteristic different from two, show it admits an interpretation through the squaring map
$$
x\longmapsto x^2.
$$
In this setting, the multiplication of a commutative algebra can be recovered from its squaring map by polarization; see \cite{Arenas21, UU19}. Moreover, the existence of a natural basis for an evolution algebra translates into the absence of mixed terms in the quadratic map. Precisely, the minimal embedding problem can be reformulated in terms of simultaneous Waring decompositions of this map. We show the minimal dimension of an ambient evolution algebra is determined by the Waring rank of the squaring map together with the dimension of the annihilator of the original algebra. In fact, for unital commutative algebras the minimal dimension coincides exactly with the Waring rank of the squaring map.  This provides a bridge between commutative algebras and rank problems for polynomial maps, and connects the embedding problem with classical questions in multiplicative complexity \cite{Alder-Strassen}.

The paper is organized as follows. Section~\ref{comideals} contains the main embedding theorem, where we give an explicit construction showing that every finite-dimensional commutative algebra can be embedded as an ideal of an evolution algebra. Section~\ref{minimal} studies the smallest possible dimension of an evolution algebra admitting such an embedding and establishes several structural properties of the resulting embeddings. Section~\ref{secwaring} relates this minimal dimension to simultaneous Waring decompositions of the squaring map of the algebra. Section~\ref{truncated} applies these results to the truncated polynomial algebras $\mathbb{F}[t]/(t^n)$ and determines their minimal evolution dimension. Finally, Section~\ref{addi} studies further embeddings preserving additional structure, namely automorphisms and multiplicative bases.

\section{Commutative algebras as ideals of evolution algebras}\label{comideals}

The starting point and original motivation for this work is the observation that every finite-dimensional commutative algebra can be realized as an ideal of an evolution algebra. We first give a direct and completely explicit construction establishing this fact.

\begin{theorem}\label{thm:main}
	Let $A$ be a commutative $\mathbb{F}$-algebra, $n:=\dim(A)<\infty$. Then there exists $X$, an evolution $\mathbb{F}$-algebra of dimension $\frac{n(n+1)}{2}$ and an algebra morphism $\psi\colon A\longrightarrow X,$ such that 
    \begin{enumerate}[label={\rm (\alph{*})}]
    \item\label{thm:main_i} $\psi$ is a monomorphism.
    \item\label{thm:main_ii} $\psi(A)$ is an ideal of $X$.
    \item\label{thm:main_iii} $X/\psi(A)$ is a degenerate algebra.
    \end{enumerate} 
\end{theorem}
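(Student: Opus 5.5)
The plan is to build $X$ directly on a vector space of dimension $n+\binom{n}{2}=\frac{n(n+1)}{2}$. I will write down a natural basis, choose the linear map $\psi$ first, and only then define the squares of the basis vectors so that $\psi$ becomes multiplicative. Everything is linear algebra, so no hypothesis on $\car(\F)$ is needed.

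\textbf{Construction.} Fix a basis $a_1,\dots,a_n$ of $A$. Let $X$ be the $\F$-vector space with basis $\{e_i : 1\le i\le n\}\cup\{e_{ij} : 1\le i<j\le n\}$, and write $e_{ji}:=e_{ij}$ for $i<j$. Define the linear map $\psi\colon A\to X$ on the basis by
$$
\psi(a_i)=e_i+\sum_{j\neq i} e_{ij}.
$$
This uses only the vector space structure of $X$. Now declare the $e_i, e_{ij}$ to be a natural basis, so all products of distinct basis vectors are $0$, and set
$$
e_{ij}^2:=\psi(a_ia_j)\ (i\neq j),\qquad e_i^2:=\psi(a_i^2)-\sum_{j\neq i}\psi(a_ia_j).
$$
This defines a commutative multiplication on $X$, and $X$ is an evolution algebra by construction. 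There is no circularity, because $\psi$ was fixed as a linear map before the product on $X$ was defined.

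\textbf{Checking that $\psi$ is a morphism.} It suffices to check on basis pairs. For $i\neq k$ the only basis vector common to the supports of $\psi(a_i)$ and $\psi(a_k)$ is $e_{ik}$, so
$$
\psi(a_i)\psi(a_k)=e_{ik}^2=\psi(a_ia_k).
$$
For $i=k$,
$$
\psi(a_i)^2=e_i^2+\sum_{j\neq i}e_{ij}^2=\psi(a_i^2).
$$

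\textbf{Items \ref{thm:main_i} and \ref{thm:main_ii}.} Item \ref{thm:main_i} holds because the $e_i$-coordinate of $\psi(a_i)$ is $1$ and all the other $\psi(a_k)$ have zero $e_i$-coordinate, so the $\psi(a_i)$ are linearly independent. For \ref{thm:main_ii}, observe that every basis square $e_i^2$ and $e_{ij}^2$ lies in $\psi(A)$. Hence $X^2\subseteq\psi(A)$, and in particular $X\psi(A)\subseteq\psi(A)$, so $\psi(A)$ is an ideal.

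\textbf{Item \ref{thm:main_iii}.} Since $X^2\subseteq\psi(A)$, the quotient $X/\psi(A)$ has zero multiplication, so it is degenerate: it equals its own annihilator.

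The only point needing care is this last item. One must check that the paper's notion of "degenerate" is satisfied by an algebra with zero product, including the edge case $n=1$ where the quotient is $0$. The construction itself is the main idea; once the squares are defined via $\psi$, everything else is immediate.
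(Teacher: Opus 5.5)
Your proposal is correct and is essentially the paper's proof: your $e_i$ and $e_{ij}$ are the paper's $b_{\{i,i\}}$ and $b_{\{i,j\}}$, your $\psi(a_i)=e_i+\sum_{j\neq i}e_{ij}$ is the paper's $\psi(e_i)=\sum_{j}b_{\{i,j\}}$, and the squares are given by the same formulas. The differences are cosmetic. You obtain the ideal and degeneracy claims at once from $X^2\subseteq\psi(A)$ and justify injectivity via the $e_i$-coordinate, whereas the paper checks the products $\psi(e_i)b_{\{r,s\}}$ case by case.
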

\begin{proof}
Let $\{e_1,\ldots, e_n\}$ be a basis of $A$ with structure coefficients $\{m_{ijk}\}_{i,j,k=1}^n$ given by the formula
$$e_ie_j=\sum_{k=1}^n m_{ijk} e_k.$$
Notice that since $A$ is commutative $m_{ijk}=m_{jik}$ for every $i,j,k=1,\ldots,n.$

Now, given integers $0<i,j\leq n,$ we let $\{i,j\}$ denote the unordered pair with elements $i$ and $j$, so $\{i,j\}=\{j,i\}.$ Thus there is a total of  $\frac{n(n+1)}{2}$ unordered pairs $\{i,j\}$ for $i,j=1,\ldots, n.$

Let $X$ be the evolution $\mathbb{F}$-algebra generated by the natural basis $B=\{b_{\{i,j\}} :  i,j=1,\ldots, n\},$ thus $\dim(X)=\frac{n(n+1)}{2},$ and multiplication given by the following squares:
\begin{enumerate}[label={\rm (\roman{*})}]
    \item\label{i} If $i\ne j$ then
    \begin{equation}
        b_{\{i,j\}}^2=\sum_{k=1}^n m_{ijk}\big(\sum_{r=1}^n b_{\{k,r\}}\big).
    \end{equation}

    \item\label{ii}  If $i=j$ then
    \begin{equation}
        b_{\{i,i\}}^2=\sum_{k=1}^n m_{iik}\big(\sum_{r=1}^n b_{\{k,r\}}\big) - \sum_{\substack{j=1 \\ j\neq i}}^n b_{\{i,j\}}^2.
    \end{equation}
\end{enumerate}
Notice that $b_{\{i,i\}}^2$ is well defined since the values of $b_{\{i,j\}}^2$ have been previously defined in \ref{i}.

Define $\psi\colon A\longrightarrow X$ to be the linear monomorphism spanned by 
$$
\psi(e_i)=\sum_{j=1}^n b_{\{i,j\}}.
$$
We claim $\psi$ is a $\mathbb{F}$-algebra morphism, that is, for any given pair of generators $e_i,e_j\in A,$ it holds $\psi(e_ie_j)=\psi(e_i)\psi(e_j).$ Indeed:
\begin{enumerate}[resume,label={\rm (\roman{*})}]
\setlength\itemsep{1em}
    \item\label{iii} If $i\ne j$ then we have
    \begin{align*}
    \psi(e_ie_j) &=\psi\big(\sum_{k=1}^n m_{ijk}e_k\big)=\sum_{k=1}^n m_{ijk}\psi(e_k)\\
    &=\sum_{k=1}^n m_{ijk}\big(\sum_{r=1}^n b_{\{k,r\}}\big)= b_{\{i,j\}}^2\text{\, (by \ref{i})}, 
    \end{align*}
    and
    \begin{align*}
    \psi(e_i)\psi(e_j) &=(\sum_{k=1}^n b_{\left\{i, k\right\}})(\sum_{r=1}^n b_{\left\{j,r\right\}})\\
    &= b_{\left\{i,j\right\}}b_{\left\{j,i\right\}}\text{\, (since $i\ne j$)}= b_{\{i,j\}}^2.
    \end{align*}
    hence $\psi(e_ie_j)=\psi(e_i)\psi(e_j).$

    \item\label{iv}  If $i=j$ then we obtain
    \begin{align*}
    \psi(e_ie_i) &=\psi\big(\sum_{k=1}^n m_{iik}e_k\big)
    =\sum_{k=1}^n m_{iik}\psi(e_k)=\sum_{k=1}^n m_{iik}\big(\sum_{r=1}^n b_{\{k,r\}}\big),
    \end{align*}
    and
    \begin{align*}
    \psi(e_i)\psi(e_i) &=(\sum_{j=1}^n b_{\{i,j\}})^2=\sum_{j=1}^n b_{\{i,j\}}^2= b_{\{i,i\}}^2+\sum_{\substack{j=1 \\ j\neq i}}^n b_{\{i,j\}}^2\\
    &=\Big(\sum_{k=1}^n m_{iik}\big(\sum_{r=1}^n b_{\{k,r\}}\big) - \sum_{\substack{j=1 \\ j\neq i}}^n b_{\{i,j\}}^2\Big)+\sum_{\substack{j=1 \\ j\neq i}}^n b_{\{i,j\}}^2\\
    &=\sum_{k=1}^n m_{iik}\big(\sum_{r=1}^n b_{\{k,r\}}\big).
    \end{align*}
    hence $\psi(e_i^2)=\psi(e_i)^2.$
\end{enumerate}
Hence $\psi$ is an algebra monomorphism.

We now show that $\psi(A)$ is an ideal of $X$. Given integers $0<i,r,s\leq n,$ we get 
\begin{equation}\label{eq:ideal}
\psi(e_i)b_{\{r,s\}}=
\begin{cases}
    0, &\text{if $i\not\in \{r,s\}$}\\
    b_{\{i,j\}}^2=\psi(e_ie_j)& \text{if $\{i,j\}= \{r,s\}$ and $i\ne j$}\\
    b_{\{i,i\}}^2=\psi(e_i^2)-\sum_{\substack{j=1 \\ j\neq i}}^n \psi(e_ie_j)& \text{if $r=s=i$}.
\end{cases}    
\end{equation}
In all cases, $\psi(e_i)b_{\{r,s\}}\in\psi(A)$ for every generator $b_{\{r,s\}}$ of $X$ and $e_i$ of $A$. Thus $\psi(A)$ is an ideal (recall $X$ is commutative).

Finally $X/\psi(A)$ is indeed degenerate; if $\overline{x}\in X/\psi(A)$ denotes the image of the element $x\in X$, we obtain $\overline{b_{\{i,j\}}}^2=\overline{b_{\{i,j\}}^2}$ and Equation \eqref{eq:ideal} shows that the square of every generator of $X$ is in $\psi(A),$ hence $\overline{b_{\{i,j\}}}^2=0$.
\end{proof}

\begin{remark}
     The previous theorem covers the finite-dimensional case. For arbitrary dimension, the natural generalization does not work. The polynomial algebra $\mathbb{F}[x]$ cannot be embedded in any evolution $\mathbb{F}$-algebra with a Hamel natural basis. This is because the unit cannot be written as a finite sum of natural vectors since that would imply that the image of the multiplication operator $L_1$ has finite dimension. Thus, the following question arises.
\end{remark}

\begin{question}
     Can the embedding theorem be extended to infinite-dimensional commutative algebras in any sense? 
\end{question}

Theorem~\ref{thm:main} provides a direct existence proof together with a uniform
explicit bound. We now turn to a different question: determining the exact minimal
possible dimension of an evolution algebra embedding $A$. For this purpose we
introduce evolution envelopes.

\section{Minimal evolution envelopes}\label{minimal}

In order to determine the minimal possible dimension of an evolution algebra into which a given commutative algebra embeds, it is convenient to restrict our attention to a special class of embeddings. 

\begin{definition}
Let $A$ be a commutative algebra over $\mathbb F$. An
\emph{evolution envelope} of $A$ is a pair $(E,\Phi)$, where $E$ is an evolution $\mathbb{F}$-algebra and $\Phi\colon A\hookrightarrow E$
is a monomorphism of algebras such that $E^2\subseteq \Phi(A)$. 
\end{definition}

The following result shows that, for the purpose of determining the minimal possible dimension of an evolution algebra embedding $A$, it is enough to consider evolution envelopes.

\begin{proposition}\label{propenv}
    Let $A$ be a finite-dimensional commutative $\mathbb{F}$-algebra. Suppose there is an embedding $\phi\colon A\hookrightarrow E$ into an evolution $\mathbb{F}$-algebra $E$. Then there is an evolution envelope $(\widetilde{E}, \phi)$ of $A$ such that $\widetilde{E} = E$ as vector spaces.
\end{proposition}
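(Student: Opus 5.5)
The idea is to keep the underlying vector space and the natural basis of $E$, and change only the squares of those basis vectors that $\phi(A)$ never "sees". Let $\mathcal{B}=\{e_i : i\in I\}$ be a natural basis of $E$ with $e_i^2=\sum_k c_{ik}e_k$. For $a\in A$ write $\phi(a)=\sum_i \alpha_i(a)e_i$, where the coordinate functionals $\alpha_i\colon A\to\mathbb F$ are linear. Then
$$\phi(a)\phi(b)=\sum_i \alpha_i(a)\alpha_i(b)\,e_i^2 .$$
Only finitely many $\alpha_i$ are nonzero on $A$, since $A$ is finite-dimensional. Set $S:=\{i\in I : \alpha_i\neq 0\}$, the support of $\phi(A)$.

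\textbf{Step 1: handle indices outside the support.} For $i\notin S$ I define $e_i^2:=0$ in $\widetilde E$. This does not affect $\phi(a)\phi(b)$, because those indices have $\alpha_i\equiv 0$.

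\textbf{Step 2: handle indices in the support.} For $i\in S$ I need to show that $e_i^2\in\phi(A)$ already holds in $E$, or else modify $e_i^2$ to lie in $\phi(A)$ while preserving all products $\phi(a)\phi(b)$. The products $\phi(a)\phi(b)=\phi(ab)$ lie in $\phi(A)$, so
$$\sum_{i\in S}\alpha_i(a)\alpha_i(b)\,e_i^2\in\phi(A)\quad\text{for all } a,b.$$
This does not force each $e_i^2$ into $\phi(A)$ individually. So I plan to redefine the squares in $\widetilde E$ by a projection. Choose a linear complement $E=\phi(A)\oplus C$ with projection $\pi\colon E\to\phi(A)$, and set $\widetilde{e_i}^{\,2}:=\pi(e_i^2)$ for all $i$; then Step 1 becomes a special case, though it can equally be done this way. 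The resulting $\widetilde E$ has the same underlying space, $\mathcal B$ is still a natural basis, and $\widetilde E^2\subseteq\phi(A)$. In $\widetilde E$ the product is
$$\phi(a)\ast\phi(b)=\sum_i\alpha_i(a)\alpha_i(b)\,\pi(e_i^2)=\pi\Big(\sum_i\alpha_i(a)\alpha_i(b)e_i^2\Big)=\pi(\phi(ab))=\phi(ab),$$
using linearity of $\pi$ and the fact that $\pi$ fixes $\phi(A)$. Hence $\phi$ is still an algebra monomorphism into $\widetilde E$, and $(\widetilde E,\phi)$ is an evolution envelope.

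\textbf{Step 3: the main point of care.} The step needing attention is that a product in an evolution algebra is bilinear and determined by the squares of the basis vectors. Replacing each $e_i^2$ by its image under a fixed linear map $\pi$ therefore replaces the whole multiplication $\mu$ by $\pi\circ\mu$. Consequently every product landing in $\phi(A)$ is preserved. If $I$ is infinite, the complement $C$ still exists by choosing a basis extension, so no finiteness of $E$ is needed. The sums are finite because each $\phi(a)$ has finite support.
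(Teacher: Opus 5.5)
Your proposal is correct and is essentially the paper's proof: replacing each $e_i^2$ by $\pi(e_i^2)$ is exactly the paper's new product $x*y=\pi(x\cdot_E y)$ for a projection $\pi\colon E\to\phi(A)$, and your verification $\phi(a)*\phi(b)=\pi(\phi(ab))=\phi(ab)$ is the same check. Step~1 is superfluous, and it is not literally a special case of the projection construction, since $\pi(e_i^2)$ need not vanish for $i\notin S$; either choice works, though.
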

\begin{proof}
    Let $\pi\colon E \rightarrow \phi(A)$ be a projection. Denote by $\widetilde{E}$ the algebra with vector space $E$ endowed with the multiplication $x * y = \pi(x \cdot_E y)$ for $x, y \in \widetilde{E}$. Clearly, $\widetilde{E}$ is an evolution algebra and we have $\widetilde{E}^2 \subseteq \phi(A)$. We verify that $\phi\colon
    A \rightarrow \widetilde{E}$ is an algebra homomorphism. For every $x, y\in A$, we have
    $$\phi(x y) =   
    \pi(\phi(xy)) = \pi(\phi(x) \cdot_E \phi(y)) = \phi(x) * \phi(y).$$
    Hence, the pair $(\widetilde{E}, \phi)$ is an evolution envelope of $A$. 
\end{proof}

We denote by
$$\operatorname{edim}(A):=\min\{\dim E:\ A\hookrightarrow E,\ E\text{ an evolution algebra}\}$$
the minimal evolution dimension of $A$. By Proposition~\ref{propenv}, the same minimum is obtained if one restricts to evolution envelopes. 

\begin{definition}
    Let $A$ be a commutative algebra of finite dimension over an arbitrary field. Given a basis $B = \{b_1, \ldots, b_n\}$, we define 
    $\varepsilon_B(A)$ as the minimal non-negative integer $s$ such that there exist $v_1, \ldots, v_s\in A$ and $\beta_{1}, \ldots, \beta_s \in A^*$ such that
    $b_i b_j = \sum_{k=1}^s \beta_k(b_i)\beta_k(b_j) v_k$ for all $i\neq j$. 
    Moreover, define   $$\varepsilon(A)=\textrm{min}\left\{\varepsilon_B(A): B \textrm{ basis of $A$}\right\}.$$ 
\end{definition}

\begin{remark}
    The index $\varepsilon_B(A)$ is well-defined, because for each $i<j$ we can take $v_{ij} = b_ib_j$ for $i\neq j$ and $\beta_{ij}\in A^*$ such that $\beta_{ij}(b_k) = \delta_{ik} + \delta_{jk}$. 
    Moreover, $\varepsilon(A)\leq n(n-1)/2$ in general, and $\varepsilon(A) = 0$ if and only if $A$ is an evolution algebra.
\end{remark}

The invariant $\varepsilon(A)$ measures how many additional dimensions are needed to absorb the cross-products. The next result shows that this
interpretation is exact.

\begin{theorem}\label{thm:embedding_amir}
    Suppose $A$ is a commutative algebra of dimension $n$ over an arbitrary field. 
    Then there is an evolution algebra $E$ of dimension $n + \varepsilon(A)$ such that $\Phi:A\hookrightarrow E$ and $E^2 \subseteq \Phi(A)$. Moreover, the algebra $E$ is a minimal-dimensional evolution algebra embedding $A$. 
\end{theorem}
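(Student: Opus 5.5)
The plan is to prove the two halves separately: first build an evolution envelope of dimension $n+\varepsilon(A)$ from an optimal choice in the definition of $\varepsilon(A)$, then show that any evolution algebra containing $A$ has dimension at least $n+\varepsilon(A)$. For the lower bound, Proposition~\ref{propenv} lets us assume the ambient algebra is an evolution envelope.

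\textbf{Construction.} Fix a basis $B=\{b_1,\dots,b_n\}$ with $\varepsilon_B(A)=\varepsilon(A)=:s$. Fix also $v_1,\dots,v_s\in A$ and $\beta_1,\dots,\beta_s\in A^*$ with $b_ib_j=\sum_{k}\beta_k(b_i)\beta_k(b_j)v_k$ for all $i\neq j$.

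Let $E$ have natural basis $\{u_1,\dots,u_n,w_1,\dots,w_s\}$. Define the linear map $\Phi$ by
$$\Phi(b_i)=u_i+\sum_{k=1}^s\beta_k(b_i)\,w_k .$$
It is injective, because its $u$-components are the identity. Define the squares by $w_k^2:=\Phi(v_k)$ and
$$u_i^2:=\Phi(b_i^2)-\sum_{k=1}^s\beta_k(b_i)^2\,\Phi(v_k).$$
These are legitimate definitions, since $\Phi$ is already defined as a linear map on $A$.

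Since the basis is natural, $\Phi(b_i)\Phi(b_j)=\delta_{ij}u_i^2+\sum_k\beta_k(b_i)\beta_k(b_j)w_k^2$.
\begin{itemize}
\item For $i\neq j$ this equals $\Phi\bigl(\sum_k\beta_k(b_i)\beta_k(b_j)v_k\bigr)=\Phi(b_ib_j)$.
\item For $i=j$ the correction term in $u_i^2$ cancels the $w$-contribution, leaving $\Phi(b_i^2)$.
\end{itemize}
So $\Phi$ is an algebra monomorphism. Every square of a basis vector lies in $\Phi(A)$, hence $E^2\subseteq\Phi(A)$ and $(E,\Phi)$ is an evolution envelope of dimension $n+s$.

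\textbf{Minimality.} Let $\phi\colon A\hookrightarrow E'$ with $E'$ an evolution algebra of dimension $m$. By Proposition~\ref{propenv} we may replace $E'$ by an envelope of the same dimension, so assume $E'^2\subseteq\phi(A)$. Take a natural basis $e_1,\dots,e_m$ of $E'$.

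Write the coordinates of a basis of $\phi(A)$ as the rows of an $n\times m$ matrix of rank $n$. Choose $n$ columns giving a nonzero minor and reorder so they are $e_1,\dots,e_n$. The projection of $\phi(A)$ onto $\operatorname{span}(e_1,\dots,e_n)$ is then an isomorphism. Pulling back $e_1,\dots,e_n$ gives a basis $b_1,\dots,b_n$ of $A$ with
$$\phi(b_i)=e_i+\sum_{k>n}c_{ik}e_k .$$

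For $i\neq j$ the product is $\phi(b_ib_j)=\sum_{k>n}c_{ik}c_{jk}e_k^2$. Each $e_k^2$ lies in $\phi(A)$, so write $e_k^2=\phi(v_k)$. Let $\beta_k\in A^*$ be defined by $\beta_k(b_i)=c_{ik}$. Injectivity of $\phi$ then gives $b_ib_j=\sum_{k>n}\beta_k(b_i)\beta_k(b_j)v_k$ for $i\neq j$. Hence $\varepsilon(A)\le\varepsilon_B(A)\le m-n$, that is, $m\ge n+\varepsilon(A)$, which proves minimality.

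The main point needing care is this basis-extraction step. One must use the envelope property to pull the squares $e_k^2$ back into $A$, and check that the chosen coordinates give a genuine basis of $A$. Both follow from the nonzero minor and from Proposition~\ref{propenv}.
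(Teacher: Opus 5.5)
Your proposal is correct and follows essentially the same route as the paper. The construction is identical: $\Phi(b_i)=u_i+\sum_k\beta_k(b_i)w_k$ with $w_k^2=\Phi(v_k)$ and the corrected squares $u_i^2$. The minimality argument is also the same: reduce to an envelope via Proposition~\ref{propenv}, then normalize a basis of $\phi(A)$ to the form $e_i+\sum_{k>n}c_{ik}e_k$. Where you select a nonzero $n\times n$ minor, the paper uses reduced column echelon form, which amounts to the same thing.
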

\begin{proof}
    Choose $B= \left\{b_{1},\dots,b_{n}\right\}$ a basis of $A$ with $\varepsilon_B(A) = \varepsilon(A) = s$. Thus, there are $v_1, \ldots, v_s\in A$ and $\beta_{1}, \ldots, \beta_s \in A^*$ such that
    $b_i b_j = \sum_{k=1}^s \beta_k(b_i)\beta_k(b_j) v_k$ for all $i\neq j$. 
    Denote by $E$ the vector space spanned by $B_0= \left\{e_{1},\dots,e_{n}\right\} \cup \left\{f_{1}, \ldots, f_{s}\right\}$. Fix $xy=0$ for $x\neq y$ with $x,y\in B_0$. It remains to define the products of the squares of elements in $B_0$. 

    Consider the linear map $\Phi:A\to E$ given by
$$
\Phi(b_i)=e_i+\sum_{k=1}^s \beta_{k}(b_i) f_k.
$$
This map is injective, because $e_i$ only appears in the support of $\Phi(b_i)$.
We choose the squares so that $\Phi$ becomes a homomorphism of algebras:
$$
f_{k}^2=\Phi(v_k)  \textrm{ for }  1\leq k \leq s  \textrm{ and } 
e_i^2=\Phi(b_i^2)-\sum_{k=1}^s \beta_{k}(b_i)^2 \Phi(v_k)  \textrm{ for } 1\leq i\leq n.
$$
We verify that $\Phi$ is a homomorphism. For $i\ne j$, we have
$$\Phi(b_i b_j)  = \sum_{k=1}^s \beta_k(b_i)\beta_k(b_j) \Phi(v_k)  = \sum_{k=1}^s \beta_k(b_i)\beta_k(b_j) f^2_k = \Phi(b_i)\Phi(b_j).
$$
Also, for $i=j$, we obtain
$$
\Phi(b_i^2)
= e_i^2 +\sum_{k=1}^s \beta_{k}(b_i)^2 \Phi(v_k) = \Phi(b_i)^2.
$$
Therefore, the map $\Phi:A\rightarrow E$ is a monomorphism. Note that $E^2 \subseteq \Phi(A)$.

Lastly, we show that the dimension of $E$ is minimal. Suppose there is $E'$ with $\dim\, E' < \dim \, E$ such that  $\Phi':A\hookrightarrow E'$. By Proposition~\ref{propenv}, we may replace $E'$ by an evolution envelope on the same underlying vector space. Consider the coordinate matrix of the basis $\left\{\Phi'(b_1), \ldots, \Phi'(b_n)\right\}$ of $\Phi'(A)$ with respect to a natural basis $\left\{e_1', \ldots, e_m'\right\}$ of $E'$. Applying Gaussian elimination, we may obtain a reduced column echelon form matrix corresponding to a new basis $\left\{b_1', \ldots, b_n'\right\}$ of $\Phi'(A)$. By reordering the basis, we obtain a natural basis $\left\{e_{1}',\dots,e_{n}'\right\} \cup \left\{f_{1}', \ldots, f_{t}'\right\}$ 
of $E'$ with $b_i' = e_i' + \sum_{k=1}^t \beta'_{ik} f_k'$ for $1\leq i \leq n$. Hence, we have $b_i'b_j' = \sum_{k=1}^t \beta'_{ik}\beta'_{jk} (f_k')^2$ for $i\neq j$. 

Choosing $\beta_1', \ldots, \beta_t' \in \Phi'(A)^*$ with $\beta_k'(b'_i) = \beta'_{ik}$ and $v_i$ the projection of $(f_i')^2$ in $\Phi'(A)$, we obtain the contradiction $\varepsilon(A)\leq t < s$.
\end{proof}

The existence problem has been refined to a minimization result.
In particular, the minimal evolution dimension is completely determined by the next formula.

\begin{corollary}
    Let $A$ be a finite-dimensional commutative algebra. Then $$\operatorname{edim}(A)=\dim A+\varepsilon(A).$$
\end{corollary}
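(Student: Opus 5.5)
The corollary follows directly from Theorem~\ref{thm:embedding_amir}, combined with Proposition~\ref{propenv}. We prove the two inequalities $\operatorname{edim}(A)\le \dim A+\varepsilon(A)$ and $\operatorname{edim}(A)\ge \dim A+\varepsilon(A)$ separately.

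For the upper bound, Theorem~\ref{thm:embedding_amir} constructs an evolution algebra $E$ of dimension $n+\varepsilon(A)$, with $n=\dim A$, together with a monomorphism $\Phi\colon A\hookrightarrow E$. Hence $\operatorname{edim}(A)\le n+\varepsilon(A)$. We should note why the minimum defining $\operatorname{edim}(A)$ exists at all. The set of dimensions of evolution algebras containing $A$ is a nonempty set of nonnegative integers: it is nonempty by Theorem~\ref{thm:main}, or already by the construction just cited. So the minimum is attained.

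For the lower bound, let $E'$ be any evolution algebra with an embedding $\Phi'\colon A\hookrightarrow E'$, and set $m=\dim E'$. By Proposition~\ref{propenv} we may replace $E'$ by an evolution envelope on the same vector space, so $m$ is unchanged. We then repeat the echelon-form argument from the minimality part of the proof of Theorem~\ref{thm:embedding_amir}.

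Take a natural basis of $E'$ and write the coordinates of $\Phi'(A)$ in it. Column reduction gives a basis $b_i'=e_i'+\sum_{k=1}^{t}\beta'_{ik}f_k'$ of $\Phi'(A)$, where $t=m-n$ and $\{e'_1,\dots,e'_n,f'_1,\dots,f'_t\}$ is a reordering of the natural basis. For $i\neq j$ this yields $b_i'b_j'=\sum_k \beta'_{ik}\beta'_{jk}(f_k')^2$. Since the envelope satisfies $E'^2\subseteq\Phi'(A)$, each $(f_k')^2$ lies in $\Phi'(A)$, so this relation is exactly of the form required in the definition of $\varepsilon_B$. Here the basis $B$ is the pullback of $\{b'_i\}$ to $A$, the vectors $v_k$ are the pullbacks of the $(f'_k)^2$, and the functionals $\beta_k$ are determined by $\beta_k(b_i)=\beta'_{ik}$. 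Therefore $\varepsilon(A)\le\varepsilon_B(A)\le t=m-n$, that is, $m\ge n+\varepsilon(A)$. Taking the minimum over all such $E'$ gives $\operatorname{edim}(A)\ge n+\varepsilon(A)$.

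The step that needs the most care is the lower bound. One point is checking that $(f'_k)^2\in\Phi'(A)$, which is exactly where passing to an envelope via Proposition~\ref{propenv} is used. The other is confirming that the echelon reduction really gives exactly $n$ pivot natural vectors, one appearing in each $b_i'$. This holds because $\Phi'$ is injective, so the coordinate matrix has rank $n$ and its reduced column echelon form has $n$ distinct pivot rows. Once both points are secured, the two inequalities combine to $\operatorname{edim}(A)=\dim A+\varepsilon(A)$.
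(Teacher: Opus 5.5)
Your proposal is correct and follows the paper's route. The upper bound is the construction in Theorem~\ref{thm:embedding_amir}. The lower bound is exactly that theorem's minimality argument: pass to an envelope via Proposition~\ref{propenv}, column-reduce the coordinates of $\Phi'(A)$ in a natural basis, and read off a decomposition witnessing $\varepsilon(A)\le t=m-n$. The only cosmetic difference is that you use $(E')^2\subseteq\Phi'(A)$ directly to get $(f'_k)^2\in\Phi'(A)$, whereas the paper takes the projection of $(f'_k)^2$ onto $\Phi'(A)$; both work.
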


\medskip

Theorem~\ref{thm:main} gives an $\frac{n(n+1)}{2}$-dimensional evolution algebra embedding $A$ for every $n$-dimensional commutative algebra $A$. It is therefore natural to ask whether this upper bound is attained.

\begin{question}
    Fixed a field $\mathbb{F}$. {Is there any $n$-dimensional commutative algebra $A$ such that $\operatorname{edim}(A) = \frac{n(n+1)}{2}$?}
\end{question}

\subsection{Properties of evolution envelopes}
After establishing the existence of evolution envelopes and determined their minimal possible dimension, we now study to what extent an evolution envelope reflects structural properties of the original algebra.

Let $A$ be an algebra and denote by $A^{(1)} = A$, $A^{(k+1)} = A^{(k)} A^{(k)}$, for $k\geq 1$, the terms in the derived series.

\begin{proposition}\label{prop:properties_env}
    Let $A$ be a finite-dimensional commutative algebra over a field $\mathbb{F}$. For any evolution algebra $E$ such that  $\Phi: A \hookrightarrow E$ with $E^2\subseteq \Phi(A)$, we have
    \begin{enumerate}
        \item For $k\geq 2$, we have $\Phi(A^{(k)}) \subseteq E^{(k)} \subseteq \Phi(A^{(k-1)})$ and $$\dim A^{(k)} \leq \dim E^{(k)} \leq \dim A^{(k-1)}.$$
        \item If $A$ is perfect (i.e. $A^2=A$), then $A\cong E^2$.
        \item The algebra $A$ is solvable if and only if $E$ is solvable.
        \item The map $\Phi$ induces a bijection between the sets of idempotents of
        $A$ and $E$.
    \end{enumerate}
\end{proposition}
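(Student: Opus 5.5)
The plan is to work only with the two inclusions $\Phi(A)\subseteq E$ and $E^2\subseteq\Phi(A)$, together with the fact that $\Phi$ is an injective algebra homomorphism. The evolution structure of $E$ is not needed. I will prove item (1) first, since items (2) and (3) are immediate consequences of it. Item (4) uses the same inclusion $E^2\subseteq \Phi(A)$ directly.

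For (1), I will prove both inclusions by induction on $k\ge 2$. For the lower inclusion, $\Phi$ is a homomorphism, so $\Phi(XY)=\Phi(X)\Phi(Y)$ for subspaces $X,Y\subseteq A$. Starting from $\Phi(A^{(1)})=\Phi(A)\subseteq E=E^{(1)}$, induction gives
\[
\Phi(A^{(k+1)})=\Phi(A^{(k)})\Phi(A^{(k)})\subseteq E^{(k)}E^{(k)}=E^{(k+1)}.
\]
For the upper inclusion, the base case is $E^{(2)}=E^2\subseteq\Phi(A)=\Phi(A^{(1)})$. If $E^{(k)}\subseteq\Phi(A^{(k-1)})$, then
\[
E^{(k+1)}=E^{(k)}E^{(k)}\subseteq \Phi(A^{(k-1)})\Phi(A^{(k-1)})=\Phi(A^{(k)}).
\]
The dimension inequalities follow because $\Phi$ is injective, so $\dim\Phi(V)=\dim V$ for every subspace $V\subseteq A$.

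For (2), if $A^2=A$ then $A^{(k)}=A$ for all $k$. Taking $k=2$ in (1) gives $\Phi(A)\subseteq E^2\subseteq \Phi(A)$, hence $E^2=\Phi(A)\cong A$. For (3), suppose $A^{(k)}=0$. Then (1) gives $E^{(k+1)}\subseteq\Phi(A^{(k)})=0$, so $E$ is solvable. Conversely, if $E^{(k)}=0$ with $k\ge 2$ (we may increase $k$ if necessary), then $\Phi(A^{(k)})\subseteq E^{(k)}=0$, and injectivity gives $A^{(k)}=0$.

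For (4), $\Phi$ maps idempotents to idempotents because it is a homomorphism, and it is injective on them. The remaining point is surjectivity onto the idempotents of $E$, which is the only step needing an idea, and a short one. If $e\in E$ satisfies $e^2=e$, then $e=e^2\in E^2\subseteq\Phi(A)$, so $e=\Phi(a)$ for some $a\in A$. Then $\Phi(a^2)=\Phi(a)^2=\Phi(a)$, and injectivity gives $a^2=a$. Hence $\Phi$ restricts to a bijection between the two sets of idempotents, with $0$ corresponding to $0$.
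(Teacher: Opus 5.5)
Your proposal is correct and takes the same approach as the paper: induction on the derived series via $\Phi(A^{(k)})\subseteq E^{(k)}$ and $E^2\subseteq\Phi(A)$ for (1), items (2) and (3) deduced from (1), and for (4) the observation $e=e^2\in E^2\subseteq\Phi(A)$ combined with injectivity of $\Phi$. You spell out the inductive steps and the index shift in the solvability argument more explicitly than the paper, which only says these follow by induction and as consequences of (1).
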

\begin{proof}
    Firstly, we have $\Phi(A^2)\subseteq \Phi(A)^2\subseteq E^2 \subseteq \Phi(A)$, so $\dim A^{(2)} \leq \dim E^{(2)} \leq \dim A^{(1)}$ by the injectivity of $\Phi$. By induction, we can prove that $\Phi(A^{(k)}) \subseteq E^{(k)} \subseteq \Phi(A^{(k-1)})$, and the first claim follows. The second and the third claims are consequences of the first one. 
    
    For the last claim, let $e\in E$ be an idempotent. Then $e\in E^2 \subseteq \Phi(A)$ and $e = \Phi(a)$ for some $a\in A$. Then we have $\Phi(a) = e = e^2 = \Phi(a^2)$ and $a^2 = a$, by the injectivity. Moreover, if $e'\neq e$ is another idempotent with $e' = \Phi(a')$, then we have $a \neq a'$. The converse is clear. 
\end{proof}

The preservation of solvability under evolution envelopes shows that the problem of characterizing solvable evolution algebras already contains the corresponding problem for arbitrary finite-dimensional commutative algebras. In fact, applying any characterization for solvability of evolution algebras to the explicit envelope associated with $A$ in Section~\ref{comideals} would yield a characterization for the solvability of $A$. This observation may help explain the difficulty of obtaining a general characterization of solvability within the class of evolution algebras. In fact, we have the following remark.

\begin{remark}
    Conjecture~3.6 in \cite{GP25} asserts that a complex evolution algebra has 
    no non-zero idempotents if and only if it is solvable. We show that the direct implication fails in general. 
    
    Consider the complex commutative algebra $A$ with basis $\left\{b_1, b_2\right\}$ and multiplication given by 
    $$b_1^2=2b_1+b_2, \quad b_1 b_2 = b_2, \quad b_2^2 = 0.$$
    Then $A$ is a complex $2$-dimensional commutative algebra, and a rescaling of its generators shows that $A$ belongs to the isomorphism class $A_1(1/2)$ described in \cite[Table 1]{2-dim}. Therefore, it does not admit non-zero idempotents by \cite[Corollary 3.2]{2-dim}. Moreover, the multiplication table shows that $A^2=A$, so $A$ is not solvable. We can embed $A$ into an evolution algebra $E$ using either of the constructions given in the proofs of Theorems~\ref{thm:main} or~\ref{thm:embedding_amir}. By Proposition~\ref{prop:properties_env}, the algebra $E$ is not solvable and contains no non-zero idempotents, providing a counterexample to \cite[Conjecture 3.6]{GP25}.

    More precisely, define $v = b_2$ and the linear functional $\beta\colon A\rightarrow \mathbb{C}$ by $\beta(b_i) = 1$. Then we have $b_1b_2 = \beta(b_1)\beta(b_2) v$, as required in the hypotheses of Theorem \ref{thm:embedding_amir}. Let $E$ be the evolution algebra with natural basis $\left\{e_1, e_2, f\right\}$ and multiplication 
    \begin{equation*}
        e_1^2 = 2e_1 + 2f, \quad e_2^2 = -e_2 - f, \quad f^2 = e_2 + f.
    \end{equation*}
    Then the linear map $\Phi\colon A\rightarrow E$, defined by $\Phi(b_1) = e_1 + f$ and $\Phi(b_2) = e_2 + f$, is the algebra monomorphism with $E^2\subseteq \Phi(A)$ given by Theorem \ref{thm:embedding_amir}.
    Hence, the algebra $E$ is a counterexample  to \cite[Conjecture 3.6]{GP25}, and according to \cite{Class2dim} its dimension is minimal among possible counterexamples.  While preparing the final version of this manuscript, the authors became aware of the independent work \cite{HuWen26}, where a three-dimensional counterexample to \cite[Conjecture~3.6]{GP25} is also constructed.
\end{remark}

We now turn to a rank-theoretic description of the same
embedding problem. The key observation is that the multiplication of a commutative algebra is encoded by the quadratic map $H_A(x)=x^2$. Over fields of characteristic different from
two, polarization allows us to pass between these two descriptions and to
reformulate evolution envelopes in terms of simultaneous Waring decompositions.

\section{An interpretation by means of the Waring rank}\label{secwaring}

Let $\mathbb{F}$ be a field of characteristic different from two. Recall the correspondence between homogeneous polynomial maps and commutative algebras via polarization.
If $V=\mathbb{F}^n$, then every $n$-tuple $H = (h_1, \ldots, h_n)$ with $h_i \in \mathbb{F}[x_1, \ldots, x_n]$ defines a polynomial mapping $H:V\rightarrow V$. If $H$ is a homogeneous quadratic $n$-tuple, then $H$ defines a commutative algebra structure on $V$ by
\begin{equation}\label{polarization}
    xy=\frac{1}{2}\big(H(x+y)-H(x)-H(y)\big).
\end{equation}
We denote this algebra by $P_H$, and call it the {\it polarization algebra} of $H$. Conversely, if $A$ is a finite-dimensional commutative algebra and we set $H_A(x)=x^2$, then $P_{H_A} = A$. Here and throughout, when coordinates are used, they are taken with respect to a basis in the context. Thus, there is a correspondence between commutative algebras and homogeneous quadratic maps. 

Evolution algebras correspond to a special class of homogeneous tuples, since a
natural basis eliminates all mixed products. 

\begin{proposition}
    Let $E$ be an $m$-dimensional evolution algebra with a natural basis $B$ and structure matrix $W$ in that basis. Then the associated quadratic map is
$H_E(y_1,\dots,y_m)= W^T (y_1^2, \ldots, y_{m}^2)^T$. Moreover, every homogeneous tuple of this form corresponds to an evolution algebra. 
\end{proposition}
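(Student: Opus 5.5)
We compute the squaring map directly in coordinates with respect to the natural basis $B=\{e_1,\dots,e_m\}$. Recall the convention for the structure matrix: $W=(w_{ij})$ with $e_i^2=\sum_{j=1}^m w_{ij}e_j$. For $y=\sum_i y_ie_i\in E$, bilinearity and commutativity give
$$
y^2=\sum_{i,j}y_iy_j\,e_ie_j .
$$
Since $e_ie_j=0$ whenever $i\neq j$, every mixed term vanishes and we are left with
$$
y^2=\sum_i y_i^2 e_i^2=\sum_j\Big(\sum_i w_{ij}y_i^2\Big)e_j .
$$
Thus the $j$-th coordinate of $H_E(y)$ is $\sum_i w_{ij}y_i^2$, which is exactly the $j$-th entry of $W^T(y_1^2,\dots,y_m^2)^T$. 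This proves the first assertion. The only care needed is the row/column convention for $W$: if the paper takes $W$ with columns given by the $e_i^2$, then the transpose is absorbed accordingly.

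For the converse, take a tuple $H(y)=W^T(y_1^2,\dots,y_m^2)^T$ for an arbitrary $m\times m$ matrix $W$, and form its polarization algebra $P_H$ via \eqref{polarization}, using that $\car\mathbb F\neq 2$. Each coordinate $h_j=\sum_i w_{ij}y_i^2$ has no mixed monomials $y_iy_k$ with $i\neq k$. Hence its associated symmetric bilinear form is diagonal:
$$
\tfrac12\big(h_j(x+y)-h_j(x)-h_j(y)\big)=\sum_i w_{ij}x_iy_i .
$$
Evaluating at standard basis vectors $x=e_k$, $y=e_l$ with $k\neq l$ gives $e_ke_l=0$ in $P_H$. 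So the standard basis is a natural basis. Taking $k=l$ gives $e_k^2=\tfrac12(H(2e_k)-2H(e_k))=H(e_k)=\sum_j w_{kj}e_j$. Therefore $P_H$ is an evolution algebra with structure matrix $W$, and its squaring map is $H$ itself, since $P_{H}$ recovers $H$ via $x^2=xx=\tfrac12(H(2x)-2H(x))=H(x)$.

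There is no real obstacle here. The only points that need attention are the transpose convention for $W$ and the use of $\car\mathbb F\neq2$ in the polarization step. The latter guarantees that the correspondence between quadratic tuples and commutative algebras is bijective, so that "no mixed terms" is equivalent to "the coordinate basis is natural."
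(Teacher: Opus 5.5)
Your proof is correct and matches the paper's intended argument: the paper gives no written proof, relying only on the remark that a natural basis eliminates all mixed products, which is exactly your computation $y^2=\sum_i y_i^2e_i^2$ plus the polarization check for the converse. Your convention $e_i^2=\sum_j w_{ij}e_j$ is also the one the paper uses later, when it writes $D(y_1,\dots,y_m)=L^{-1}W^T(y_1^2,\dots,y_m^2)^T$.
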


The preceding proposition suggests factoring a general quadratic map through
a diagonal quadratic map. This is the point of a simultaneous Waring
decomposition. Recall that a {\it (simultaneous) Waring decomposition} of $H$ is an expression
\begin{equation}\label{waring}
H(x)=\sum_{k=1}^m \ell_k(x)^2v_k,
\end{equation}
where $\ell_k\in V^*$ and $v_k\in V$. The {\it length} of the decomposition is the number $m$ of summands. The minimal possible length is called the {\it Waring rank} of $H$, which we denote by $\wr\,(H)$. 
Equivalently, the Waring decomposition can be written as $H=D L$, where the linear map 
$L:V\rightarrow\mathbb{F}^m$ is given by $L(x)=(\ell_1(x),\dots,\ell_m(x))$
and
$D:\mathbb{F}^m\rightarrow V$ is given by $D(y_1,\dots,y_m)=\sum_{k=1}^m y_k^2v_k$. The minimal length of a Waring decomposition for which the associated map $L$ is injective will be denoted by $\sr(H)$ and called the {\it injective simultaneous Waring rank}. 

We can now identify injective Waring decompositions with evolution envelopes.

\begin{theorem}
Let $A$ be a finite-dimensional commutative algebra and consider $H:A\rightarrow A$ with $H(x)=x^2$. 
\begin{enumerate}
    \item If $H=DL$ is a Waring decomposition with $L$ injective, then $(P_{\widetilde{H}},L)$ with ${\widetilde{H}} = LD$ is an evolution envelope of $A$. 
    \item Conversely, if $(E,\Phi)$ is an evolution envelope of $A$, then $H = DL$, where $L = \Phi$ and $D= \Phi^{-1} H_E$, is a Waring decomposition with $L$ injective.
\end{enumerate}
\end{theorem}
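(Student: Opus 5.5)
Both parts are direct verifications, carried out in the order the statement lists them, with the preceding Proposition on quadratic maps of evolution algebras as the main tool. Throughout, $\car\mathbb{F}\neq 2$, so a commutative algebra is determined by its squaring map via \eqref{polarization}. We write $H_{P}$ for the squaring map of an algebra $P$.

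For (1), let $H=DL$ with $L\colon A\to\mathbb{F}^m$ injective and $D(y)=\sum_k y_k^2 v_k$. Set $\widetilde H=LD\colon \mathbb{F}^m\to\mathbb{F}^m$, so that
$$\widetilde H(y)=\sum_{k=1}^m y_k^2\,L(v_k).$$
This has the form $W^T(y_1^2,\dots,y_m^2)^T$, where the $k$-th row of $W$ is the coordinate vector of $L(v_k)$. By the preceding Proposition, $P_{\widetilde H}$ is therefore an evolution algebra whose natural basis is the standard basis.

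To see that $L$ is an algebra morphism, it suffices, by polarization, to check $L(x^2)=L(x)L(x)$ in $P_{\widetilde H}$. The product in $P_{\widetilde H}$ satisfies $zz=\widetilde H(z)$ because $\tfrac12(\widetilde H(2z)-2\widetilde H(z))=\widetilde H(z)$. Hence
$$L(x)L(x)=\widetilde H(L(x))=LDL(x)=LH(x)=L(x^2).$$
Both sides of the polarization identity then agree, so $L(xy)=L(x)L(y)$. Injectivity of $L$ is assumed. For the envelope condition, $P_{\widetilde H}^2$ is spanned by the squares $\widetilde H(z)=L(D(z))$, so $P_{\widetilde H}^2\subseteq L(A)$.

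For (2), let $(E,\Phi)$ be an evolution envelope and fix a natural basis of $E$, identifying $E=\mathbb{F}^m$. By the preceding Proposition, $H_E(y)=\sum_k y_k^2 w_k$ with $w_k=e_k^2\in E^2\subseteq\Phi(A)$. Since $\Phi$ is injective, $v_k:=\Phi^{-1}(w_k)\in A$ is well defined, and
$$D:=\Phi^{-1}H_E\colon y\mapsto\sum_k y_k^2 v_k$$
is a diagonal quadratic map $\mathbb{F}^m\to A$. Because $\Phi$ is a morphism, $H_E(\Phi(x))=\Phi(x)^2=\Phi(x^2)$, so $DL(x)=\Phi^{-1}\Phi(x^2)=H(x)$ with $L=\Phi$. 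Writing $\ell_k$ for the $k$-th coordinate of $\Phi$ recovers the form \eqref{waring}.

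The only delicate point is that $\Phi^{-1}H_E$ is well defined, which uses $E^2\subseteq\Phi(A)$ on the whole of $E$ and not merely on $\Phi(A)$. This is exactly the envelope hypothesis, and Proposition~\ref{propenv} guarantees that nothing is lost by restricting to envelopes. Everything else is bookkeeping with polarization.
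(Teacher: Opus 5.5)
Your proof is correct and follows the paper's argument essentially step by step. In (1) you identify $\widetilde H=LD$ as a diagonal quadratic map and obtain both the morphism property and $P_{\widetilde H}^2\subseteq L(A)$ by polarization; in (2) you define $D=\Phi^{-1}H_E$ using $E^2\subseteq\Phi(A)$ and check $DL=H$, exactly as the paper does (the only cosmetic difference is that you verify $L(x^2)=L(x)L(x)$ first and then polarize, whereas the paper polarizes $LH=\widetilde H L$ directly).
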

\begin{proof}
    Suppose first that $H=DL$ is a Waring decomposition with $L$ injective. We prove that $(P_{\widetilde{H}},L)$ with ${\widetilde{H}} = LD$ is an evolution envelope of $P_{H} = A$. 

    First, notice that $P_{\widetilde{H}}$ is an evolution algebra, since for $y \in P_{\widetilde{H}}$ we have 
    $$LD(y) = (\sum_{k=1}^m y_k^2\ell_1(v_k),\dots,\sum_{k=1}^m y_k^2\ell_m(v_k)).$$
    
    Now, the linear map $L: P_{H}\rightarrow P_{\widetilde{H}}$ is injective, so we show that it is a homomorphism. For $a,b\in P_{H}$, we have
    \begin{equation*}
        \begin{split}
            L(ab) &= \frac{1}{2}\big(L H(a+b)-L H(a)- L H(b)\big) \\
            & = \frac{1}{2}\big(L D L(a+b)-L D L(a)- L D L(b)\big)\\
            & = \frac{1}{2}\big({\widetilde{H}} L(a+b)-{\widetilde{H}} L(a)- {\widetilde{H}} L(b)\big) = L(a)L(b).\\
        \end{split}
    \end{equation*}

    Finally, the condition  $P_{\widetilde{H}}^2\subseteq L(P_{H})$ follows from
    $$ ab = \frac{1}{2}\big(\widetilde{H}(a+b)- \widetilde{H}(a)-  \widetilde{H}(b)\big) = L(\frac{1}{2}\big(D(a+b)- D(a)-  D(b)\big)),$$
    for every $a, b\in P_{\widetilde{H}}$, since $\frac{1}{2}\big(D(a+b)- D(a)-  D(b)\big) \in P_H$.

    Conversely, suppose that $(E,\Phi)$ is an evolution envelope of $A$. Let $\widetilde{H}$ be such that $P_{\widetilde{H}} = E$. Write  $L := \Phi$. We can choose $D := L^{-1} \widetilde{H}$ since $\widetilde{H}(x)\in E^2 \subseteq L(A)$. Then we claim that $H = DL$ is a Waring decomposition with $L$ injective and that $\widetilde{H}= LD$. Indeed, for every $x\in A$ we have
    $$DL(x) = L^{-1}\widetilde{H}L(x) = L^{-1}(L(x)L(x)) = L^{-1}L(x^2) = x^2 = H(x).$$
    
    Finally, let $\left\{e_1, \ldots, e_m\right\}$ be a natural basis of $E$ and let $W = (w_{ij})$ be the structure matrix of $E$ with respect to that basis, then we have
    $$D(y_1, \ldots, y_m) = L^{-1}W^T(y_1^2, \ldots, y_m^2)^T$$
    which is of the form $D(y_1,\dots,y_m)=\sum_{k=1}^m y_k^2v_k$ for certain $v_k \in A$. Hence $H=DL$ is a Waring decomposition with $L$ injective as stated.    
\end{proof}

\begin{corollary}\label{corsredim}
    Let $A$ be a finite-dimensional commutative algebra. Then  $$\operatorname{edim}(A)=\sr\,(H_A).$$
    In particular, $A$ is an evolution algebra if and only if $\sr\,(H_A) = \textrm{dim}\, A$. 
\end{corollary}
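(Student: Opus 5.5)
The corollary will follow by combining the preceding theorem with Proposition~\ref{propenv}. By Proposition~\ref{propenv}, $\operatorname{edim}(A)$ is the minimum of $\dim E$ over all evolution envelopes $(E,\Phi)$ of $A$. So it suffices to show two inequalities.

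First, $\operatorname{edim}(A)\le \sr(H_A)$. Take an injective Waring decomposition $H_A=DL$ of length $m=\sr(H_A)$, with $L\colon A\to\mathbb F^m$ injective. Part (1) of the preceding theorem gives an evolution envelope $(P_{\widetilde H},L)$ with $\widetilde H=LD$. Its underlying space is $\mathbb F^m$, so it has dimension $m$.

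Second, $\sr(H_A)\le\operatorname{edim}(A)$. Take an evolution envelope $(E,\Phi)$ with $\dim E=\operatorname{edim}(A)=m$. Fixing a natural basis identifies $E$ with $\mathbb F^m$. Part (2) of the theorem then produces a Waring decomposition $H_A=DL$ with $L=\Phi$ injective. Its length is the number of coordinates of $L$, namely $m$. Hence $\sr(H_A)\le m$. Together the two inequalities give $\operatorname{edim}(A)=\sr(H_A)$.

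For the final assertion, note that any injective $L\colon A\to\mathbb F^m$ forces $m\ge\dim A$, so $\sr(H_A)\ge\dim A$ always holds, consistent with $\operatorname{edim}(A)=\dim A+\varepsilon(A)$. If $A$ is an evolution algebra, the identity embedding $A\hookrightarrow A$ shows $\operatorname{edim}(A)=\dim A$. Conversely, suppose $\sr(H_A)=\dim A$. Then $\operatorname{edim}(A)=\dim A$, so $\varepsilon(A)=0$ by the corollary to Theorem~\ref{thm:embedding_amir}, and the remark following the definition of $\varepsilon$ says this means $A$ is an evolution algebra. Alternatively, any embedding of $A$ into an evolution algebra of the same dimension is a bijective algebra morphism, so $A$ is isomorphic to that evolution algebra.

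The only point needing care is the bookkeeping of lengths. In part (2), the length of the decomposition must equal $\dim E$. Writing $D(y)=L^{-1}W^T(y_1^2,\dots,y_m^2)^T$ exhibits exactly $m$ summands $y_k^2v_k$, with $v_k$ the $k$-th column transported back to $A$ by $L^{-1}$; some $v_k$ may be zero, which is harmless for an upper bound. The hypothesis $\car\mathbb F\ne2$ is used implicitly, since the theorem relies on polarization.
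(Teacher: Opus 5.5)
Your proof is correct and follows the paper's route: the corollary comes straight from the two parts of the preceding theorem (evolution envelopes of dimension $m$ correspond to Waring decompositions of $H_A$ of length $m$ with $L$ injective), with Proposition~\ref{propenv} guaranteeing that $\operatorname{edim}(A)$ is attained by an envelope. Your treatment of the final assertion is also sound: you use $\operatorname{edim}(A)\ge\dim A$, and an embedding into an evolution algebra of the same dimension is an isomorphism.
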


We denote $\Ann(A) = \left\{x\in A: xA = 0\right\}$ and we write $J$ for the Jacobian matrix.

\begin{proposition}\label{kerann}
    Let $A$ be a finite-dimensional commutative algebra. Given a Waring decomposition $H_A=DL$, then $\ker L \subseteq \Ann(A)$. Moreover, if the length is minimal, then $\ker L = \Ann(A)$.
\end{proposition}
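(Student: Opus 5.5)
The argument works directly with the polarization formula \eqref{polarization} and the fact that $\car\mathbb{F}\neq 2$; no earlier result beyond the definitions is needed. Throughout, write $H=H_A$ and $H_A=DL$ with $L(x)=(\ell_1(x),\dots,\ell_m(x))$ and $D(y)=\sum_{k=1}^m y_k^2v_k$.

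\emph{Inclusion $\ker L\subseteq\Ann(A)$.} Let $x\in\ker L$ and $y\in A$. Since $L$ is linear, $L(x+y)=L(y)$ and $L(x)=0$. Hence
\[
H(x+y)=D(L(y))=H(y), \qquad H(x)=D(0)=0 .
\]
Polarization then gives $xy=\tfrac12\big(H(x+y)-H(x)-H(y)\big)=0$, so $x\in\Ann(A)$. This step is routine.

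\emph{Reverse inclusion when the length is minimal.} Here ``minimal'' means $m=\wr(H_A)$. The key observation is that $H$ is invariant under translation by the annihilator: for $a\in\Ann(A)$ and $x\in A$,
\[
(x+a)^2=x^2+2xa+a^2=x^2 .
\]
Suppose, for a contradiction, that some $a\in\Ann(A)$ satisfies $L(a)\neq 0$. Then $\ell_k(a)\neq 0$ for some index $k$. Define the linear projection
\[
q(x)=x-\frac{\ell_k(x)}{\ell_k(a)}\,a .
\]
Its image lies in $\ker\ell_k$, and $x-q(x)\in\Ann(A)$, so $H(x)=H(q(x))$ for every $x$. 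Therefore
\[
H(x)=\sum_{j=1}^m \ell_j(q(x))^2v_j=\sum_{j\neq k}(\ell_j\circ q)(x)^2\,v_j ,
\]
because $\ell_k\circ q=0$. This is a Waring decomposition of $H$ of length $m-1$, contradicting minimality. Hence $\Ann(A)\subseteq\ker L$, and together with the first part, $\ker L=\Ann(A)$.

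The only point that needs care is this length-reduction step. One must use the translation invariance of $H$ under $\Ann(A)$, rather than merely the factorization of $H$ through $A/\Ann(A)$, because the factorization alone does not lower the number of squares. One must also precompose every $\ell_j$ with $q$, not just drop the $k$-th term. The minimality used is with respect to all Waring decompositions, so injectivity of $L$ is not required in this argument.
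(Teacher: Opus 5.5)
Your proof is correct and follows the paper's argument: the reverse inclusion is exactly the paper's elimination step with $\lambda=-\ell_k(x)/\ell_k(a)$, which you write as the linear projection $q$, making explicit that each $\ell_j\circ q$ is again a linear form. The only difference is in the first inclusion, where you apply the polarization identity directly instead of the paper's Jacobian identity $2xy=J_{H_A}(x)y$ with the chain rule, which makes your version slightly more self-contained.
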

\begin{proof}
    Suppose $z\in \ker L$. We have $2xy = J_{H_A}(x) y$ for $x,y \in A$, see \cite[Lemma 2]{Arenas21}. Since $J_{H_A}(z) = J_{D}(L(z)) J_L(z)$, it follows $2zx = J_{D}(L(z)) J_L(z) x = 0$. Therefore, $z\in \Ann(A)$.
    Moreover, assume the length of the decomposition is minimal. Suppose $z\in \Ann(A)$ and $z\not\in \ker L$. We can assume without any loss of generality that $\ell_1(z)\neq 0$. Then note that for $x\in A$ and $\lambda\in \mathbb{F}$, we have $H(x)=H(x+\lambda z)$. 

    If we put $\lambda= -\ell_1(x)/\ell_1(z)$, then $\ell_1(x+\lambda z)=0$ implying that we can compute $H(x)$ without using the summand $\ell_1(x)^2v_1$. Observe that we have
    $$H(x) = H(x+\lambda z) = \sum_{k=2}^m \ell_k(x+\lambda z)^2 v_k.$$
  This contradicts the minimality of the length. Hence, we have $\ker L = \Ann(A)$.
\end{proof}

\begin{remark}
    The equality does not hold for an arbitrary Waring decomposition. Consider the algebra $A = \mathbb{F}e \oplus \mathbb{F}f$ with multiplication on basis elements given by $e^2 = e$ and zero otherwise. Then $\Ann(A) = \mathbb{F} f$. However, the quadratic map
    $$H_A(x_1, x_2) = (x_1^2, 0)$$
    and the maps 
    $$L(x_1, x_2) = (x_1, x_2, x_2), \qquad D(y_1, y_2, y_3) = (y_1^2 + y_2^2 - y_3^2) e$$ define a Waring decomposition of $H_A$ with $\ker L = 0$.
\end{remark}

\begin{corollary}\label{annwr}
Let $A$ be a finite-dimensional commutative algebra. Then
    $$\sr(H_A) = \wr(H_A) + \dim(\Ann(A)).$$
\end{corollary}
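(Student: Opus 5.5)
We prove the two inequalities $\sr(H_A)\le \wr(H_A)+\dim\Ann(A)$ and $\sr(H_A)\ge \wr(H_A)+\dim\Ann(A)$ separately, using Proposition~\ref{kerann} in both. Write $d=\dim\Ann(A)$ and $r=\wr(H_A)$.

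\emph{Upper bound.} Take a Waring decomposition $H_A=DL$ of minimal length $r$, with $D(y)=\sum_{k=1}^r y_k^2v_k$. By Proposition~\ref{kerann}, $\ker L=\Ann(A)$. Choose $\ell_{r+1},\dots,\ell_{r+d}\in A^*$ whose restrictions to $\Ann(A)$ form a basis of $\Ann(A)^*$. The enlarged map $L'=(\ell_1,\dots,\ell_{r+d})$ is then injective: if $L'(z)=0$, then $z\in\ker L=\Ann(A)$, and the new functionals force $z=0$. Set $v_{r+1}=\dots=v_{r+d}=0$, i.e. $D'(y)=\sum_{k=1}^{r}y_k^2v_k$ on $\mathbb F^{r+d}$. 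Then $H_A=D'L'$ is an injective Waring decomposition of length $r+d$, so $\sr(H_A)\le r+d$.

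\emph{Lower bound.} Take an injective Waring decomposition $H_A=DL$ of length $m=\sr(H_A)$, given by $\ell_1,\dots,\ell_m$ and $v_1,\dots,v_m$. We want to discard $d$ of the summands and still have a Waring decomposition of $H_A$. The key fact is $H_A(x+z)=H_A(x)$ for $z\in\Ann(A)$, since $(x+z)^2=x^2+2xz+z^2=x^2$. Because $L$ is injective, the restrictions $\ell_k|_{\Ann(A)}$ span $\Ann(A)^*$. Choose indices $k_1,\dots,k_d$ such that these $d$ restrictions form a basis of $\Ann(A)^*$. Let $\{z_1,\dots,z_d\}$ be the dual basis of $\Ann(A)$, so $\ell_{k_i}(z_j)=\delta_{ij}$. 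Define the linear projection
$$p(x)=x-\sum_{j=1}^d \ell_{k_j}(x)\,z_j .$$
Then $\ell_{k_i}(p(x))=0$ for all $i$, and $x-p(x)\in\Ann(A)$. Hence
$$H_A(x)=H_A(p(x))=\sum_{k\notin\{k_1,\dots,k_d\}}(\ell_k\circ p)(x)^2\,v_k ,$$
which is a Waring decomposition of length $m-d$. Therefore $r\le m-d$, i.e. $\sr(H_A)\ge r+d$.

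The main obstacle is the lower bound, specifically extracting $d$ functionals that can all be killed at once by a single linear correction inside $\Ann(A)$. This is exactly the dual-basis choice above, which generalizes the single-summand argument in the proof of Proposition~\ref{kerann}. Combining the two inequalities, and recalling Corollary~\ref{corsredim}, gives the stated equality.
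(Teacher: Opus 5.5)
Your proof is correct and follows the paper's argument. The upper bound is identical: you extend a minimal decomposition, whose kernel is $\Ann(A)$ by Proposition~\ref{kerann}, by $\dim\Ann(A)$ functionals with $v_k=0$; the lower bound is the elimination argument of Proposition~\ref{kerann}, except that you remove all $d$ summands at once via the dual-basis projection $p$, which makes rigorous the ``straightforward induction'' the paper leaves implicit.
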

\begin{proof}
     Let $H_A = \sum_{k=1}^m \ell_k(x)^2v_k$ be a Waring decomposition of minimal length. By Proposition~\ref{kerann}, we have $\ker L = \Ann(A)$. Let $c_1, \ldots, c_s$ be a basis of $\ker L$.  
     Choose forms $\ell_{m+k}\in A^*$ such that $\ell_{m+k}(c_j) = \delta_{kj}$ and set $v_{m+k} = 0$, then we have a Waring decomposition $H_A = \sum_{k=1}^{m+s} \ell_k(x)^2v_k$. Moreover, the map $x\to (\ell_1(x),\dots,\ell_{m+s}(x))$ is injective.  Hence, it follows $\sr(H_A) \leq \wr(H_A) + \textrm{dim}\, \Ann(A)$.
     
     Conversely, let $H_A = DL$ be a Waring decomposition with $L$ injective and length $\sr(H_A)$. Then for a non-zero $z\in \Ann(A)$ such that $z\not\in \ker L$, we can use the elimination argument of Proposition~\ref{kerann}. By a straightforward induction, we conclude  $\sr(H_A) \geq \wr(H_A) + \textrm{dim}\, \Ann(A)$.
\end{proof}

By combining Corollary~\ref{corsredim} and Corollary~\ref{annwr}, we obtain the identity
\begin{equation}\label{eq:edim}
\operatorname{edim}(A)
=\dim A+\varepsilon(A)
=\sr(H_A)
=\wr(H_A)+\dim\Ann(A).   
\end{equation}
In particular, if the algebra is unital, then $\operatorname{edim}(A) = \wr(H_A).$ 

\medskip

The correspondence between commutative algebras and homogeneous quadratic maps can be extended to commutative $m$-ary algebras, see \cite{UU19}.

\begin{question}
The notion of an $m$-ary evolution algebra arises naturally as an $m$-ary analogue of an evolution algebra. The arguments developed in this paper seem to extend to the $m$-ary setting, at least over fields of characteristic zero or characteristic $p>m$. In particular, does every finite-dimensional commutative $m$-ary algebra embed into a $m$-ary evolution algebra? We expect the answer to be affirmative. This question provides a natural motivation for the systematic study of $m$-ary evolution algebras. 
\end{question}

\section{The truncated polynomial algebras.} \label{truncated}

    Let $A_n$ denote the algebra of truncated polynomials $A_n:=\mathbb{F}[t]/(t^n).$ In this section, we study the minimal dimensional evolution envelopes of $A_n$ by determining the Waring rank of its squaring map.  

    We first compute the rank of the coordinate functions of the squaring map $H_{A_n}$.

\begin{lemma}\label{lem:wr_f_i}
    Let $\mathbb{F}$ be a field with $\car(\mathbb{F})\ne 2$, and $f_n\in\mathbb{F}[x_0,\ldots,x_n]$ be the quadratic form
    $$f_n=\sum_{k=0}^n x_k x_{n-k}.$$
    Then $\wr(f_n)=n+1$. In particular, the following decompositions are minimal:
    \begin{equation}\label{eq:Waring_dec_truncated_poly}
    f_n =
    \begin{cases}
        \frac{1}{2}\sum_{k=0}^m\Big((x_k+x_{2m+1-k})^2-(x_k-x_{2m+1-k})^2\Big), &\text{ if } n=2m+1.\\
        \\
        x_m^2+\frac{1}{2}\sum_{k=0}^{m-1}\Big((x_k+x_{2m-k})^2-(x_k-x_{2m-k})^2\Big), &\text{ if } n=2m.
    \end{cases}
    \end{equation}
\end{lemma}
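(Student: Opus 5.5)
Two things have to be shown: the displayed decompositions are valid and have $n+1$ squares, which gives $\wr(f_n)\le n+1$; and no decomposition can use fewer than $n+1$ squares. Here $\wr$ of a single quadratic form means the minimal number of squares of linear forms (with coefficients) summing to it. This is the case of the simultaneous notion with target space $\mathbb{F}$.

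For the upper bound, I would first collect terms. Each pair $\{k,n-k\}$ with $k\neq n-k$ occurs twice in $f_n$. So $f_n=2\sum_{k<n-k}x_kx_{n-k}$, plus $x_m^2$ when $n=2m$. Since $\car\mathbb{F}\neq 2$, the identity $2ab=\frac12\big((a+b)^2-(a-b)^2\big)$ turns each mixed term into two squares.

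Now count. For $n=2m+1$ there are $m+1$ pairs, giving $2(m+1)=n+1$ squares. For $n=2m$ there are $m$ pairs plus $x_m^2$, giving $2m+1=n+1$ squares. This verifies \eqref{eq:Waring_dec_truncated_poly} and the bound $\wr(f_n)\le n+1$.

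For the lower bound, I would use the rank of the symmetric matrix. Suppose $f_n=\sum_{k=1}^s c_k\ell_k^2$. The associated symmetric bilinear form, i.e.\ half the Hessian, is then $\sum_k c_k\,\ell_k\ell_k^T$, a sum of $s$ matrices of rank at most one. Hence its rank is at most $s$.

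The Gram matrix of $f_n$ in the variables $x_0,\dots,x_n$ is the anti-diagonal (exchange) matrix, whose entry $(i,j)$ is $1$ when $i+j=n$ and $0$ otherwise. This matrix is a permutation matrix, so it is invertible and has rank $n+1$. Therefore $s\ge n+1$, and $\wr(f_n)=n+1$, which also shows the displayed decompositions are minimal.

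The main point needing care is that the characteristic is not $2$. This is used both to divide by $2$ in the upper bound and to identify $f_n$ with its symmetric matrix in the lower bound, so that the rank argument applies. Beyond that, there is no real obstacle.
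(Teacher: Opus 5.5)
Your proposal is correct and follows the paper's proof essentially step for step. The upper bound comes from the polarization identity $2uv=\frac12\big((u+v)^2-(u-v)^2\big)$ together with the pair count. The lower bound comes from the anti-diagonal Gram matrix having full rank $n+1$, combined with subadditivity of rank over the rank-one summands $c_k\ell_k\ell_k^{T}$.
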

\begin{proof}
The identities describing $f_n$ follow immediately from the polarization identity
$$ 2uv=\frac12\big((u+v)^2-(u-v)^2\big).$$
Thus, when $n=2m+1$, the terms in $f_n$ occur in $m+1$ distinct pairs, and the first decomposition has  $2(m+1)=n+1$ square summands. When $n=2m$, there are $m$ such pairs together with the middle term $x_m^2$,
so the second decomposition has $2m+1=n+1$ square summands.

It remains to show that no decomposition with fewer summands is possible.  

Let $M_n$ be the symmetric matrix representing $f_n$, so that
$$
f_n(x)=x M_n x^{\mathsf T},
 \qquad x=(x_0,\ldots,x_n).
$$
Since the coefficient of $x_ix_j$ is zero unless $i+j=n$, the matrix $M_n$ is the antidiagonal matrix
$$
M_n=\begin{pmatrix}
 0&\cdots&0&1\\
 0&\cdots&1&0\\
 \vdots&\iddots&\vdots&\vdots\\
 1&\cdots&0&0
 \end{pmatrix}.
$$
Notice that, for $i\neq j$, the pair of terms $x_ix_j+x_jx_i$ contributes
$2x_ix_j$ to $f_n$, as required. 

The matrix $M_n$ is invertible. In fact,
$M_n^2=\operatorname{Id}_{n+1}$, thus
$$\operatorname{rank}(M_n)=n+1.$$

Now assume that
$$f_n=\sum_{j=1}^{s}\lambda_j\ell_j^2$$
is any Waring decomposition, where $\ell_j(x)=v_jx^{\mathsf T}$ for some vector $v_j\in \mathbb{F}^{n+1}$. 

The symmetric matrix associated with $\lambda_j\ell_j^2$ is $\lambda_jv_j^{\mathsf T}v_j$, which has rank at
most one. Therefore,
$$M_n=\sum_{j=1}^{s}\lambda_jv_j^{\mathsf T}v_j,$$
and the subadditivity of matrix rank gives
$$ n+1=\operatorname{rank}(M_n)
 \leq \sum_{j=1}^{s}\operatorname{rank}
       (\lambda_jv_j^{\mathsf T}v_j)
 \leq s.$$
Therefore every Waring decomposition of $f_n$ contains at least $n+1$ square summands. Since each of the decompositions in Equation \eqref{eq:Waring_dec_truncated_poly} contains
exactly $n+1$ summands, both are minimal.
\end{proof}

The sharper simultaneous Waring decomposition of $H_n{:=}H_{A_n}$ is given in the next proposition.

\begin{proposition}\label{prop:up_wr_H}
    Let $\mathbb{F}$ be a field that contains $2n-1$ pairwise distinct scalars, and let $e_0,\ldots,e_{n-1}$ be the canonical basis of $\mathbb{F}^{n}$. Define
$$
f_i(x_0,\ldots,x_i)=\sum_{r=0}^{i}x_rx_{i-r},\qquad 0\leq i\leq n-1,
$$
and
$$
H_{n}\colon\F^{n}\longrightarrow\F^{n},\qquad
H_{n}(x_0,\ldots,x_{n-1})=\sum_{i=0}^{n-1}f_i(x_0,\ldots,x_i)e_i.
$$
Then $\wr(H_{n})\leq 2n-1$.
\end{proposition}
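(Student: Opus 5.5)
We aim for an explicit simultaneous Waring decomposition of $H_n$ of length $2n-1$, obtained by evaluation and interpolation, which is the classical Toom--Cook idea for multiplying polynomials. Identify $x=(x_0,\ldots,x_{n-1})$ with the polynomial $p_x(t)=\sum_{r=0}^{n-1}x_rt^r$. Then $p_x(t)^2=\sum_{i=0}^{2n-2}g_i(x)t^i$ with $g_i=\sum_{r+s=i}x_rx_s$, so $g_i=f_i$ for $0\le i\le n-1$. Thus $H_n(x)$ is the vector of the first $n$ coefficients of the polynomial $p_x^2$, whose degree is at most $2n-2$.

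The key steps are as follows.

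\begin{enumerate}
\item Choose pairwise distinct scalars $\alpha_1,\ldots,\alpha_{2n-1}\in\F$, which exist by hypothesis. Define the linear forms $\ell_k(x)=p_x(\alpha_k)=\sum_{r=0}^{n-1}\alpha_k^r x_r$ for $1\le k\le 2n-1$. Then $\ell_k(x)^2=p_x(\alpha_k)^2$ is the value at $\alpha_k$ of $p_x^2$.

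\item Since $\deg p_x^2\le 2n-2$ and the $2n-1$ nodes are distinct, Lagrange interpolation recovers $p_x^2$ from these values. Concretely, the Vandermonde matrix $V=(\alpha_k^{i})_{k,i}$, of size $(2n-1)\times(2n-1)$, is invertible. Writing $V^{-1}=(c_{ik})$, we get $g_i(x)=\sum_{k=1}^{2n-1}c_{ik}\,\ell_k(x)^2$ for all $i$.

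\item Set $v_k=\sum_{i=0}^{n-1}c_{ik}e_i\in\F^n$. Then
$$H_n(x)=\sum_{i=0}^{n-1}g_i(x)e_i=\sum_{k=1}^{2n-1}\ell_k(x)^2v_k,$$
which is a Waring decomposition of length $2n-1$ in the sense of \eqref{waring}. Hence $\wr(H_n)\le 2n-1$.
\end{enumerate}

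There is no real obstacle. The only point needing care is that $g_i=f_i$ for $i\le n-1$. This holds because the truncation discards exactly the coefficients of $t^i$ with $i\ge n$, and these are the only ones involving pairs $(r,s)$ with $r+s\ge n$. The characteristic assumption is not even needed for this step; only the invertibility of the Vandermonde matrix is used. Optionally, one may note that a node $\alpha=\infty$, with $\ell(x)=x_{n-1}$ giving the leading coefficient, could replace one finite node. That refinement is not needed here.
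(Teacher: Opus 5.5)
Your proof is correct and follows the paper's argument essentially verbatim: the forms $\ell_k(x)=\sum_r \alpha_k^r x_r$ come from evaluating at $2n-1$ distinct nodes, the coefficients $g_i$ of $p_x^2$ are recovered by inverting the Vandermonde matrix, and only the first $n$ of them are kept when forming the $v_k$. Your $v_k=\sum_{i<n}c_{ik}e_i$ are exactly the solutions of the paper's linear system $\sum_j\lambda_j^i v_j=e_i$ for $i<n$ and $0$ for $n\le i\le 2n-2$.
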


\begin{proof}
Pick $2n-1$ pairwise distinct scalars $\lambda_1,\dots,\lambda_{2n-1}\in\F$ and set
$$
\ell_j(x)=\sum_{k=0}^{n-1} \lambda_j^kx_k, \qquad (1\leq j \leq 2n-1).
$$
Then 
$$\ell_j(x)^2=\sum_{i=0}^{2n-2}g_i(x)\lambda_j^{i}, \qquad (1\leq j \leq 2n-1).
$$ 
with $g_i(x)=\sum_{k+l=i}x_kx_l$. In particular,  $g_i=f_i$ for $i< n$. 

Since the $(2n-1)\times(2n-1)$ Vandermonde matrix $V=(\lambda_j^{i})$ is invertible, one can solve the equations
$$
\sum_{j=1}^{2n-1} \lambda_j^{i}v_j=
\begin{cases}
    e_i\,,&\text{ for $i< n$}\\
    0\,, &\text{ $n\leq i\le 2n-2$}
\end{cases} 
$$
With those $v_j$, we have
$$
\sum_{j=1}^{2n-1}\ell_j(x)^2v_j
        =\sum_{i=0}^{2n-2}g_i(x)
          \Big(\sum_{j=1}^{2n-1}\lambda_j^iv_j\Big)
        =\sum_{i=0}^{n-1}f_i(x)e_i
        =H_{n}(x).
$$
Hence, we conclude $\wr(H_{n})\leq 2n-1$.
\end{proof}

\begin{example}
If $\car(\F)\ne 2$,  $H_2(x)=x_0^2e_0+2x_0x_1e_1$, and taking
$\lambda_\bullet=\{0,1,-1\}$, 
the argument above leads to  the linear equations
$$\left\{
\begin{aligned}
  v_1 + v_2+ v_3 &= e_0 \\
  v_2 - v_3 &= e_1\\
  v_2 + v_3 &= 0
\end{aligned}
\right.$$
with solutions $v_1=e_0,$ $v_2=\tfrac12 e_1$, and $v_3=-\tfrac12 e_1$. Therefore
$$H_2(x)=x_0^2e_0+\tfrac12\big((x_0+x_1)^2-(x_0-x_1)^2\big)e_1$$ that uses the three squares $x_0^2,(x_0\pm x_1)^2$.    
\end{example}

\begin{remark}
    Observe that 
    $$\max\{\wr(f_i)\,:\, i=0,\ldots,n-1\}\leq\wr(H_{n})\leq \sum_{i=0}^{n-1} \wr(f_i).$$
    Therefore, if $\car(\F)\ne 2$, Lemma \ref{lem:wr_f_i} applies and
     $$n\leq \wr(H_{n})\leq \frac{n(n+1)}{2},$$
     thus $\wr(H_{n})$ is always bounded in this case.

     Nevertheless, the upper bound given in Proposition \ref{prop:up_wr_H} actually requires the base field to have at least $2n-1$ pairwise distinct elements. Indeed, if $\F=\F_3$, and $n=4$, then a brute force calculation shows that $\wr(H_4)>7.$
\end{remark}

To obtain a lower bound, we recall briefly the relation between the Waring rank and the multiplicative complexity of an algebra. Given an associative unital algebra $A$, let $L(A)$ denotes the complexity of $A$ \cite[Definition 3]{Alder-Strassen}. By \cite[Proposition, p.~204]{Alder-Strassen}, we have 
\begin{equation}\label{eq:complexy_vs_waring}
    L(A) \leq \wr(H_A).
\end{equation}
Then, Alder-Strassen's Theorem \cite{Alder-Strassen} gives rise to the following.

\begin{proposition}\label{prop:lower bound}
    Let $\mathbb{F}$ be a field of characteristic different from two. Then $$\wr(H_{A_n})\geq 2 n - 1.$$ 
\end{proposition}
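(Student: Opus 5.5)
Combining inequality \eqref{eq:complexy_vs_waring} with the Alder--Strassen theorem should give the bound directly. The algebra $A_n=\mathbb{F}[t]/(t^n)$ is associative, commutative and unital of dimension $n$, so \eqref{eq:complexy_vs_waring} applies: $L(A_n)\le \wr(H_{A_n})$. It then suffices to show $L(A_n)\ge 2n-1$.

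For this I invoke the Alder--Strassen theorem. It states that for a finite-dimensional associative algebra $A$ with $t$ maximal two-sided ideals, the multiplicative complexity satisfies
$$L(A)\ \ge\ 2\dim A - t .$$
I check the hypothesis as follows. $A_n$ is local: its unique maximal ideal is $(t)/(t^n)$, since every element with nonzero constant term is invertible. Hence $t=1$, and
$$L(A_n)\ \ge\ 2n-1 .$$
Chaining the two inequalities gives $\wr(H_{A_n})\ge L(A_n)\ge 2n-1$.

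The main obstacle is making sure the notions match. The complexity $L(A)$ in \cite[Definition 3]{Alder-Strassen} counts bilinear (or quadratic) multiplications. The inequality \eqref{eq:complexy_vs_waring} converts a simultaneous Waring decomposition $H_A(x)=\sum_{k=1}^m \ell_k(x)^2 v_k$ into a quadratic computation of the product of length $m$. The conversion works by polarization, $xy=\tfrac12\big(H_A(x+y)-H_A(x)-H_A(y)\big)$, and this is exactly where $\car(\mathbb{F})\neq 2$ is used.

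A second point needs care: the Alder--Strassen bound must hold over the given field $\mathbb{F}$, which is arbitrary apart from $\car\ne2$ and possibly finite. As I recall, the theorem holds over any field. If needed, I will also note that the quadratic (commutative) complexity is bounded below by the bilinear complexity up to the same bound, for instance via the standard argument that a quadratic algorithm yields a bilinear one of at most twice the length. Since \eqref{eq:complexy_vs_waring} is already stated in the paper, I will take it as given and only cite the theorem.

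As a sanity check, for $n=2$ the bound gives $\wr(H_2)\ge 3$. This agrees with the Example, where $\wr(H_2)\le 3$. Indeed $x_0^2$ and $2x_0x_1$ cannot be written simultaneously with two squares: that would force both coordinate forms to be combinations of $\ell_1^2,\ell_2^2$, but $x_0x_1$ has rank $2$ and $x_0^2$ has rank $1$, a contradiction.
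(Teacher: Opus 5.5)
Your chain $\wr(H_{A_n})\ge L(A_n)\ge 2n-1$ is the paper's chain, and your check that $A_n$ is local (so $t(A_n)=1$) is correct. The gap is at the point you yourself flagged: you apply the Alder--Strassen theorem directly over $\F$, and the proposition allows $\F$ to be finite (the paper itself discusses $\F_3$). The paper invokes that theorem only over an infinite field. Writing ``as I recall, the theorem holds over any field'' is not a justification.

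Your fallback does not repair this. Turning a quadratic computation into a bilinear one of at most twice the length gives $R\le 2L$, hence only $L\ge R/2$, which is roughly $n$ and not $2n-1$. Relatedly, the conversion behind \eqref{eq:complexy_vs_waring} is not the literal formula $\tfrac12\big(H_A(x+y)-H_A(x)-H_A(y)\big)$, which would cost $3m$ products. It is the bilinear identity $xy=\sum_k\ell_k(x)\ell_k(y)v_k$ that polarization yields. That identity even gives $R(A_n)\le\wr(H_{A_n})$, but you would still need a lower bound valid over the field at hand.

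The missing step is the scalar extension the paper uses. A Waring decomposition $H_{A_n}(x)=\sum_{k=1}^m\ell_k(x)^2v_k$ over $\F$ is still a Waring decomposition of the squaring map of $\overline{A}_n=A_n\otimes_\F\overline{\F}=\overline{\F}[t]/(t^n)$. Hence $\wr(H_{\overline{A}_n})\le\wr(H_{A_n})$. Since $\overline{\F}$ is infinite and $\overline{A}_n$ is still local, Alder--Strassen gives $L(\overline{A}_n)\ge 2n-1$ (in fact equality). Therefore $2n-1\le L(\overline{A}_n)\le\wr(H_{\overline{A}_n})\le\wr(H_{A_n})$. With this step inserted, your argument becomes exactly the paper's proof.
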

\begin{proof}
    Let $\overline{\F}$ denote the algebraic closure of $\F$, and define $$\overline{A}_n:=A_n\otimes_\F \overline{\F}=\overline{\F}[t]/(t^n).$$

    On one hand, since every Waring decomposition of $H_{A_n}$ induces a Waring decomposition of $H_{\overline{A}_n}$, then 
    \begin{equation}\label{eq:Lower_closure}
    \wr(H_{\overline{A}_n})\le \wr(H_{{A}_n}).    
    \end{equation}

    On the other hand, $\overline{\F}$ is an infinite field, thus by \cite[Theorem, p.\ 203]{Alder-Strassen} we obtain
    \begin{equation}\label{eq:Alder-Strassen}
    L(\overline{A}_n)=2n-1.    
    \end{equation}
    The result follows by combining Equations~\eqref{eq:complexy_vs_waring}, \eqref{eq:Lower_closure}, and \eqref{eq:Alder-Strassen}.
\end{proof}

The simultaneous Waring decomposition given in Proposition \ref{prop:up_wr_H} and the Alder–Strassen lower bound in Proposition \ref{prop:lower bound} completely determine the evolution dimension of the truncated polynomial algebra.

\begin{theorem}\label{trunc}
    Let $\mathbb{F}$ be a field of characteristic different from two that contains $2n-1$ pairwise distinct scalars and let $n\geq 1$. Then $$\operatorname{edim}(A_n) = 2n-1.$$
    In particular, if $n>1$ then $A_n$ is not an evolution algebra.
\end{theorem}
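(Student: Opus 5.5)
The plan is to combine the identity \eqref{eq:edim} with the two bounds just established. Since $A_n=\mathbb{F}[t]/(t^n)$ is unital, its annihilator is trivial: if $z\in\Ann(A_n)$ then $z=z\cdot 1=0$. Hence \eqref{eq:edim}, which holds because $\car(\mathbb{F})\neq 2$, reduces to
$$\operatorname{edim}(A_n)=\wr(H_{A_n}).$$
It therefore suffices to show that $\wr(H_{A_n})=2n-1$.

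First I identify $H_{A_n}$ with the map $H_n$ of Proposition~\ref{prop:up_wr_H}. In the monomial basis $1,t,\ldots,t^{n-1}$, write $x=\sum_k x_k t^k$. Then
$$x^2=\sum_{i=0}^{n-1}\Big(\sum_{r=0}^i x_rx_{i-r}\Big)t^i,$$
because the terms with $i\geq n$ vanish modulo $t^n$. So in these coordinates $H_{A_n}=H_n$. Since $\mathbb{F}$ contains $2n-1$ distinct scalars, Proposition~\ref{prop:up_wr_H} gives $\wr(H_{A_n})\leq 2n-1$. Proposition~\ref{prop:lower bound} gives $\wr(H_{A_n})\geq 2n-1$. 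Hence $\operatorname{edim}(A_n)=2n-1$.

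For the final claim, $A_n$ is an evolution algebra if and only if $\operatorname{edim}(A_n)=\dim A_n=n$: one direction is the identity embedding, and the other is Corollary~\ref{corsredim} together with the vanishing annihilator. For $n>1$ we have $2n-1>n$, so $A_n$ is not an evolution algebra.

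The only substantive input is the lower bound, which rests on the Alder--Strassen theorem and is already taken care of by Proposition~\ref{prop:lower bound}. The remaining care is to check that the coordinate description of $H_{A_n}$ matches $H_n$ exactly, including the truncation at degree $n-1$, and that the unitality argument applies. Both are routine. The case $n=1$ is consistent, since $\operatorname{edim}(A_1)=1$.
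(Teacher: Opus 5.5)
Your proposal is correct and follows the paper's own argument. You use unitality to kill $\Ann(A_n)$ so that \eqref{eq:edim} gives $\operatorname{edim}(A_n)=\wr(H_{A_n})$, identify $H_{A_n}=H_n$, and then combine the upper bound of Proposition~\ref{prop:up_wr_H} with the Alder--Strassen lower bound of Proposition~\ref{prop:lower bound}; the final claim then follows from $2n-1>n$, where only the trivial direction $\operatorname{edim}\le\dim$ for evolution algebras is actually needed.
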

\begin{proof}
    Since $A_n$ is a unital algebra, $\Ann(A_n)=0$ and Equation~\eqref{eq:edim} reduces to $\operatorname{edim}(A_n) = \wr(H_{A_n}).$ Since $H_{A_n}=H_n$, the statement follows by Proposition~\ref{prop:up_wr_H} and Proposition~\ref{prop:lower bound}. 

    Finally, if $n>1$ then $$\operatorname{edim}(A_n) = 2n-1>n=\dim(A_n),$$ hence $A_n$ is not an evolution algebra.
\end{proof}

\begin{remark}
    The minimal evolution envelopes of $A_n$ are not unique up to isomorphism. Note that if $\mathbb{F} = \mathbb{C}$, we can apply the construction of Proposition~\ref{prop:up_wr_H} to $A_2$ using $\lambda_\bullet$ equal to one of the sets
    $$\left\{-1,0,1\right\},\qquad \qquad \left\{1,2,3\right\},$$
    and denote the resulting $3$-dimensional evolution algebra by $E_{\left\{-1,0,1\right\}}$ and $E_{\left\{0,1,2\right\}}$, respectively. A direct computation shows that these algebras are not isomorphic: for example, notice that $$1=\dim \operatorname{Der}\, (E_{\left\{-1,0,1\right\}})\neq \dim \operatorname{Der}\, (E_{\left\{1,2,3\right\}})=0.$$
    Moreover, not even the $\Delta$-traces coincide, in the sense of \cite{CFK24}, since 
    $$\left\{{\bf I}_2,{\bf E}_1\right\} =\Delta\, (E_{\left\{-1,0,1\right\}}) \neq \Delta\, (E_{\left\{1,2,3\right\}}) = \left\{{\bf E}_1,{\bf E}_1,{\bf E}_1\right\}.$$
\end{remark}

\begin{question}
    Evolution algebras trivially have a unique minimal dimensional evolution envelope. Which commutative algebras have a unique, up to isomorphism, minimal evolution envelope?
\end{question}

\section{Embeddings preserving additional structure}\label{addi}

The preceding sections address the existence and minimal dimension of evolution
algebra embeddings. We now consider whether evolution embeddings can be chosen to
preserve additional structure of the original algebra.

\subsection{Embeddings preserving automorphisms} 
Given a finite-dimensional algebra $A$ with finite automorphism group, it is natural to ask if the embedding can be chosen so that every automorphism of $A$ extends to an automorphism of the ambient evolution algebra. The following result shows that this is always possible. 

\begin{theorem}\label{thm:equivariant_embedding}
    Let $A$ be a finite-dimensional commutative $\mathbb{F}$-algebra and suppose that $\Aut(A)$ is finite. Then there exists an algebra monomorphism
$$\widetilde\psi\colon A\longrightarrow \widetilde X$$ such that $\widetilde X$ is a finite-dimensional evolution algebra and every automorphism of $A$ lifts to an automorphism of the evolution algebra $\widetilde X$. More precisely, there exists a group monomorphism
$$
\begin{array}{ccc}
     \Aut(A) &\longrightarrow & \Aut(\widetilde X),\\
     \tau& \longmapsto &\widetilde g_\tau
\end{array}
$$
such that
\begin{equation}\label{eq:equivariance}
 \widetilde g_\tau\circ \widetilde\psi
 =
 \widetilde\psi\circ \tau
\end{equation}
for every $\tau\in \Aut(A)$.
\end{theorem}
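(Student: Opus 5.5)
The plan is to start from an arbitrary evolution envelope of $A$ and \emph{induce} it along the regular action of $G:=\Aut(A)$. The natural basis of $\widetilde X$ will be permuted by $G$, which produces the group monomorphism $\tau\mapsto\widetilde g_\tau$. By Theorem~\ref{thm:main}, or Theorem~\ref{thm:embedding_amir} together with Proposition~\ref{propenv}, fix an envelope $(E,\Phi)$ with natural basis $u_1,\dots,u_m$. Write
$$\Phi(a)=\sum_{k=1}^m\lambda_k(a)u_k,\qquad \lambda_k\in A^*,$$
and $w_k:=\Phi^{-1}(u_k^2)\in A$; this is possible because $E^2\subseteq\Phi(A)$. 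The fact that $\Phi$ is a homomorphism then says exactly that
$$ab=\sum_{k=1}^m\lambda_k(a)\lambda_k(b)\,w_k\qquad\text{for all }a,b\in A,$$
with no restriction on the characteristic.

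Let $\widetilde X$ have basis $\{u_{k,\sigma}: 1\le k\le m,\ \sigma\in G\}$, with $u_{k,\sigma}u_{l,\rho}=0$ whenever $(k,\sigma)\neq(l,\rho)$. Define
$$\widetilde\psi(a)=\sum_{k,\sigma}\lambda_k(\sigma a)\,u_{k,\sigma},\qquad \widetilde g_\tau(u_{k,\sigma})=u_{k,\sigma\tau^{-1}}.$$
Reindexing gives $\widetilde g_\tau\widetilde\psi(a)=\widetilde\psi(\tau a)$, which is \eqref{eq:equivariance}. The assignment $\tau\mapsto\widetilde g_\tau$ is a homomorphism, since it comes from the right regular action, and it is injective because $G$ acts freely on the index set. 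Injectivity of $\widetilde\psi$ follows from that of $\Phi$, by looking at the $\sigma=1$ coordinates. The squares must be chosen $G$-equivariantly, so that each $\widetilde g_\tau$ is an algebra automorphism. This forces $u_{k,\sigma}^2=\widetilde g_{\sigma^{-1}}(u_{k,1}^2)$. The natural choice is $u_{k,\sigma}^2=\widetilde\psi(\sigma^{-1}w_k)$. Using $\sigma^{-1}((\sigma a)(\sigma b))=ab$, the homomorphism check gives
$$\widetilde\psi(a)\widetilde\psi(b)=\sum_{\sigma\in G}\widetilde\psi\Big(\sigma^{-1}\sum_k\lambda_k(\sigma a)\lambda_k(\sigma b)w_k\Big)=|G|\,\widetilde\psi(ab).$$
Hence, when $\car\F\nmid|G|$, dividing the squares by $|G|$ finishes the construction.

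The main obstacle is the modular case $\car\F\mid|G|$, where this averaging breaks down. Here I would try not to give every copy the same weight. The idea is to make only one copy per orbit ``carry'' the product, while the other copies have vanishing mixed contributions. Concretely, I would replace the regular $G$-set by the $G$-set of unordered pairs, as in Theorem~\ref{thm:main}, taken on a $G$-stable finite spanning set $S=G\cdot\{e_1,\dots,e_n\}$ of $A$. The natural basis would be $b_{\{s,t\}}$ for $s,t\in S$, with $G$ permuting the pairs, and the squares would be defined by the formulas (i)--(ii) of that proof, but written in terms of the $G$-equivariant products $st\in A$ rather than structure constants. The delicate point, which I expect to be the real work, is to find an equivariant linear section $A\to\F^{S}$ recovering coordinates; this is exactly where the characteristic could interfere again.

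If this fails, I would fall back on a ``Frobenius reciprocity'' trick. One keeps the regular-copy construction but replaces the squares $u_{k,\sigma}^2$ with elements of the form $\widetilde\psi(\sigma^{-1}c_k)$, and then solves for $c_k\in A$ so that $\sum_\sigma\sum_k\lambda_k(\sigma a)\lambda_k(\sigma b)\sigma^{-1}c_k=ab$. To gain enough freedom for this linear system to be solvable, one can first enlarge the family $\{\lambda_k\}$ to a $G$-stable one, namely all $\lambda_k\circ\sigma$.
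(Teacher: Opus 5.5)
Your construction is correct when $\car\F\nmid|G|$. The theorem, however, is stated over an arbitrary field, and for the modular case you only sketch two strategies without carrying either out. Neither is close to a proof:
the pair construction needs a $G$-equivariant section $A\to\F^{S}$, which would make $A$ a direct summand of the permutation module $\F^{S}$, and there is no general reason for that when $\car\F$ divides $|G|$;
the second strategy asks for the multiplication of $A$ to be a transfer $\sum_\sigma\sigma^{-1}\beta(\sigma\,\cdot,\sigma\,\cdot)$ of some bilinear map, and in the modular case $G$-invariant elements need not be transfers.
So as written there is a genuine gap.

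The obstruction is self-inflicted. It comes solely from your choice $u_{k,1}^2=\widetilde\psi(w_k)$, which forces every square into the diagonal $\widetilde\psi(A)$. Equivariance only requires $u_{k,\sigma}^2=\widetilde g_{\sigma^{-1}}(u_{k,1}^2)$. You may instead take $u_{k,1}^2$ to be the copy of $u_k^2=\Phi(w_k)$ inside the $\sigma=1$ block, i.e.\ $u_{k,1}^2=\sum_l\lambda_l(w_k)u_{l,1}$, hence $u_{k,\sigma}^2=\sum_l\lambda_l(w_k)u_{l,\sigma}$. Then
\[
\widetilde\psi(a)\widetilde\psi(b)=\sum_{l,\sigma}\lambda_l\Big(\sum_k\lambda_k(\sigma a)\lambda_k(\sigma b)\,w_k\Big)u_{l,\sigma}=\sum_{l,\sigma}\lambda_l\big((\sigma a)(\sigma b)\big)u_{l,\sigma}=\sum_{l,\sigma}\lambda_l\big(\sigma(ab)\big)u_{l,\sigma}=\widetilde\psi(ab),
\]
with no factor $|G|$, in every characteristic. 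Moreover, $\widetilde g_\tau$ is still an automorphism, since it permutes blocks with identical multiplication.

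This is precisely the paper's proof. There $\widetilde X=\bigoplus_{\sigma\in G}X_\sigma$ is a direct sum of copies of the envelope with componentwise product, and each coordinate $a\mapsto c_\sigma\psi_0(\sigma^{-1}a)$ is a composite of homomorphisms. Your equivariance, group-homomorphism and injectivity checks carry over unchanged. What your averaged version buys (when $\car\F\nmid|G|$) is $\widetilde X^2\subseteq\widetilde\psi(A)$, so that $\widetilde\psi(A)$ is an ideal with degenerate quotient. The theorem does not ask for this, and the direct sum does not provide it.
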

\begin{proof}
 To simplify the exposition, we set $G:=\Aut(A)$. 
 Let $\psi_0:A\longrightarrow X_0$ be the algebra monomorphism given by either Theorem \ref{thm:main} or Theorem \ref{thm:embedding_amir}. Now, for every $\sigma\in G$, take a copy $X_\sigma$ of $X_0$, and denote by
$$c_\sigma\colon X_0\longrightarrow X_\sigma$$
the canonical copy isomorphism. 

Define
\begin{equation}\label{eq:X-tilde}
 \widetilde X:=\bigoplus_{\sigma\in G} X_\sigma.
\end{equation}
Since $G$ is finite, $\widetilde X$ is finite-dimensional. Moreover, since a finite direct sum of evolution algebras is again an evolution algebra, $\widetilde X$ is an evolution algebra. A natural basis of $\widetilde X$ is obtained by taking the union of the natural bases of all the summands $X_\sigma$.

For $a\in A$, define
\begin{equation}\label{eq:psi-tilde}
 \widetilde\psi(a)
 :=
 \big(c_\sigma(\psi_0(\sigma^{-1}(a)))\big)_{\sigma\in G}
 \in
 \bigoplus_{\sigma\in G} X_\sigma.
\end{equation}
Equivalently, the $\sigma$-component of $\widetilde\psi(a)$ is
$$
 \widetilde\psi(a)_\sigma
 =
 c_\sigma(\psi_0(\sigma^{-1}(a))).
$$

First, we prove that $\widetilde\psi$ is an algebra homomorphism. For each $\sigma\in G$, the map
$$a\longmapsto c_\sigma(\psi_0(\sigma^{-1}(a)))$$
is a composition of algebra homomorphisms
$$ A\xrightarrow{\sigma^{-1}} A\xrightarrow{\psi_0} X_0\xrightarrow{c_\sigma} X_\sigma.
$$
Therefore each component of $\widetilde\psi$ is an algebra homomorphism. Since multiplication in $\widetilde X=\bigoplus_{\sigma\in G}X_\sigma$ is componentwise, $\widetilde\psi$ itself is an algebra homomorphism.

It is injective because each component
$$a\longmapsto c_\sigma(\psi_0(\sigma^{-1}(a)))$$
is injective.

Now fix $\tau\in G$. Define a linear map
\[
\widetilde g_\tau:\widetilde X\longrightarrow \widetilde X
\]
by
\begin{equation}\label{eq:g-tau}
 (\widetilde g_\tau(x))_\sigma
 =
 c_\sigma c_{\tau^{-1}\sigma}^{-1}(x_{\tau^{-1}\sigma})
\end{equation}
for every $x=(x_\sigma)_{\sigma\in G}\in \widetilde X$ and every $\sigma\in G$.

Thus $\widetilde g_\tau$ simply permutes the direct summands according to the rule
\[
 X_{\tau^{-1}\sigma}\longrightarrow X_\sigma,
\]
using the canonical copy isomorphisms. Hence $\widetilde g_\tau$ is an automorphism of the evolution algebra $\widetilde X$, and $\widetilde g_\tau$ is the identity morphism if and only if $\tau$ is the identity. 

We now verify the equivariance relation. For $a\in A$, the $\sigma$-component of $\widetilde g_\tau(\widetilde\psi(a))$ is
\begin{align*}
 (\widetilde g_\tau(\widetilde\psi(a)))_\sigma
 &=
 c_\sigma c_{\tau^{-1}\sigma}^{-1}
 \left(
 c_{\tau^{-1}\sigma}
 \big(\psi_0((\tau^{-1}\sigma)^{-1}(a))\big)
 \right) \\
 &=
 c_\sigma
 \big(\psi_0((\tau^{-1}\sigma)^{-1}(a))\big).
\end{align*}
Since $(\tau^{-1}\sigma)^{-1}=\sigma^{-1}\tau,$
we obtain
$$ (\widetilde g_\tau(\widetilde\psi(a)))_\sigma
 =
 c_\sigma\big(\psi_0(\sigma^{-1}(\tau(a)))\big).
$$
But this is precisely the $\sigma$-component of $\widetilde\psi(\tau(a))$. Therefore
\[
\widetilde g_\tau(\widetilde\psi(a))
=
\widetilde\psi(\tau(a))
\]
for every $a\in A$, proving \eqref{eq:equivariance}.

Finally, the assignment
$$\tau\longmapsto \widetilde g_\tau$$
is a group homomorphism. Indeed, for $\tau,\rho\in G$ and $x\in \widetilde X$, one checks componentwise that
$$ \widetilde g_\tau\widetilde g_\rho(x)
 =
 \widetilde g_{\tau\rho}(x).$$
The identity element of $G$ maps to the identity automorphism of $\widetilde X$. Hence the desired group homomorphism is obtained.
\end{proof}

\subsection{Embeddings preserving multiplicative bases}
Recall that a basis of an algebra is multiplicative when products of basis
elements are again scalar multiples of basis elements, see \cite{CN16}. The next result shows that this property can be preserved by the ambient evolution algebra, although the multiplicative basis obtained there need not coincide
with a natural basis. 

\begin{theorem}\label{thm:mult2evol}
Let $A$ be an $n$-dimensional commutative algebra over a field $\mathbb{F}$ with
$\car(\mathbb{F})\neq 2$. Assume that $A$ admits a multiplicative
basis. Then there exists an $n^2$-dimensional evolution algebra $B$ which itself admits a multiplicative basis, and a $\mathbb{F}$-algebra morphism $\varphi\colon A\longrightarrow B$ such that
\begin{enumerate}
\item $\varphi$ is a monomorphism, that is, $A$ embeds into $B$,
\item $\varphi(A)$ is an ideal in $B$.
\item $B/\varphi(A)$ is degenerate.
\end{enumerate}
\end{theorem}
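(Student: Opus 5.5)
The plan is to redo the construction of Theorem~\ref{thm:main} with \emph{ordered} pairs. Each unordered pair $\{i,j\}$ with $i<j$ gets two natural basis vectors instead of one. These two vectors realise the polarization identity $2uv=\tfrac12\big((u+v)^2-(u-v)^2\big)$, which is where $\car(\mathbb{F})\neq2$ is used. This gives $n+2\binom n2=n^2$ natural basis vectors.

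\textbf{Setup.} Let $\{e_1,\dots,e_n\}$ be a multiplicative basis of $A$, so that $e_ie_j=m_{ij}\,e_{k(i,j)}$ for scalars $m_{ij}=m_{ji}$. Let $B$ be the evolution algebra with natural basis
$$\{d_i:1\le i\le n\}\cup\{p_{ij},\,q_{ij}:1\le i<j\le n\}.$$
Define the linear map $\varphi$ by
$$\varphi(e_i)=d_i+\sum_{j>i}(p_{ij}+q_{ij})+\sum_{j<i}(p_{ji}-q_{ji}).$$
For $i<j$ the supports of $\varphi(e_i)$ and $\varphi(e_j)$ meet exactly in $\{p_{ij},q_{ij}\}$, with coefficients $(1,1)$ and $(1,-1)$ respectively. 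Hence
$$\varphi(e_i)\varphi(e_j)=p_{ij}^2-q_{ij}^2.$$
This suggests defining the squares by
$$p_{ij}^2=\tfrac12\varphi(e_ie_j),\qquad q_{ij}^2=-\tfrac12\varphi(e_ie_j),\qquad d_i^2=\varphi(e_i^2).$$

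\textbf{Homomorphism, injectivity, ideal, quotient.} With these squares, $\varphi(e_i)\varphi(e_j)=\varphi(e_ie_j)$ for $i\ne j$. For the diagonal products, the contributions $p_{ij}^2+q_{ij}^2$ cancel, so $\varphi(e_i)^2=d_i^2=\varphi(e_i^2)$; no correction term like the one in Theorem~\ref{thm:main} is needed. Injectivity holds because $d_i$ occurs only in $\varphi(e_i)$. Every natural basis vector squares into $\varphi(A)$, so $B^2\subseteq\varphi(A)$. Therefore $\varphi(A)$ is an ideal and $B/\varphi(A)$ is degenerate, exactly as in the proof of Theorem~\ref{thm:main}.

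\textbf{Multiplicative basis.} This is the only genuinely new point. The natural basis itself need not be multiplicative, since for instance $d_i^2=m_{ii}\varphi(e_{k(i,i)})$ is not a multiple of a natural basis vector. So I would replace each $d_i$ by $\varphi(e_i)$ and take
$$\mathcal{C}=\{\varphi(e_i)\}_{i}\cup\{p_{ij},q_{ij}\}_{i<j}.$$
$\mathcal{C}$ is a basis because $\varphi(e_i)\equiv d_i$ modulo the span of the $p$'s and $q$'s. It remains to check that every product of two elements of $\mathcal{C}$ is a scalar multiple of an element of $\mathcal{C}$:
\begin{enumerate}
\item $\varphi(e_i)\varphi(e_j)=m_{ij}\varphi(e_{k(i,j)})$, including the case $i=j$.
\item $\varphi(e_a)p_{ij}$ is $0$ unless $a\in\{i,j\}$, in which case it equals $p_{ij}^2=\tfrac12 m_{ij}\varphi(e_{k(i,j)})$.
\item $\varphi(e_a)q_{ij}$ is $0$ unless $a\in\{i,j\}$, in which case it equals $\pm q_{ij}^2=\mp\tfrac12 m_{ij}\varphi(e_{k(i,j)})$.
\item Distinct $p$'s and $q$'s multiply to $0$, and $p_{ij}^2,q_{ij}^2$ are multiples of $\varphi(e_{k(i,j)})$.
\end{enumerate}
Checking this bookkeeping is the step I expect to need the most care, but it seems routine.
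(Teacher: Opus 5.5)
Your proposal is correct and is essentially the paper's proof. Under the relabelling $d_i=b_{i,i}$, $p_{ij}=b_{i,j}$, $q_{ij}=-b_{j,i}$ (for $i<j$), your $\varphi$ and your squares $\varphi(e_i^2)$, $\pm\frac12\varphi(e_ie_j)$ are exactly the paper's. For the multiplicative basis, the paper only asserts that some basis $\widetilde{\mathcal B}$ with $\varphi(\mathcal A)\subseteq\widetilde{\mathcal B}\subseteq\varphi(\mathcal A)\sqcup\mathcal B$ works; your $\mathcal C$, obtained by dropping the $d_i$, is an explicit such choice, and your direct check of its products is sound.
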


\begin{proof}
Choose a multiplicative basis $\mathcal{A}=\{e_1,\dots,e_n\}$ of $A$, and write
$$e_i e_j= c_{i,j}\, e_{\mu(i,j)},$$
where $c_{i,j}=c_{j,i}$, and $\mu\colon\{1,\ldots,n\}\times \{1,\ldots,n\}\to \{1,\ldots,n\}$ is a fixed map with $\mu(i,j)=\mu(j,i).$

Let $B$ be the $n^2$-dimensional $\mathbb{F}$-vector space with basis
$\mathcal B=\{b_{i,r}:1\le i,r\le n\},$ and define a linear monomorphism
$\varphi\colon A\longrightarrow B$
given by $$\varphi(e_i)=\sum_{r=1}^n b_{i,r}+\sum_{s<i}^n b_{s,i} - \sum_{s>i}^n b_{s,i}.$$

We now think of $B$ as an evolution $\mathbb{F}$-algebra with natural basis $\mathcal B$, and multiplication given by:
$$
b_{i,j}^2=
\begin{cases}
 \frac{c_{i,j}}{2} \varphi(e_{\mu(i,j)})=\frac{1}{2}\varphi(e_ie_j) &\text{ if } i<j,\\
c_{i,i}\, \varphi(e_{\mu(i,i)}) = \varphi(e_i^2)  & \text{ if }i=j,\\
\frac{-c_{i,j}}{2} \varphi(e_{\mu(i,j)})=\frac{-1}{2}\varphi(e_ie_j) &\text{ if } i>j.
\end{cases}
$$
Then $\varphi$ becomes an algebra monomorphism. Note that if $i\ne j,$ and assume $i<j$, then 
\begin{align*}
\varphi(e_i)\varphi(e_j) & =\big(\sum_{r=1}^n b_{i,r}+\sum_{s<i}^n b_{s,i} - \sum_{s>i}^n b_{s,i}\big)\big(\sum_{t=1}^n b_{j,r}+\sum_{u<j}^n b_{u,j} - \sum_{u>j}^n b_{u,j}\big)\\
& =(b_{i,j}-b_{j,i})(b_{j,i}+b_{i,j})\\
& = b_{i,j}^2-b_{j,i}^2\\
& = \frac{1}{2}\varphi(e_ie_j) - \frac{-1}{2}\varphi(e_ie_j)\\
& = \varphi(e_ie_j).
\end{align*}
In the case of squares,
\begin{align*}
\varphi(e_i)^2 & =\big(\sum_{r=1}^n b_{i,r}+\sum_{s<i}^n b_{s,i} - \sum_{s>i}^n b_{s,i}\big)^2\\
& =\sum_{r=1}^n b_{i,r}^2+\sum_{s\ne i}^n b_{s,i}^2\\
& = b_{i,i}^2+\sum_{j<i} \big(b_{i,j}^2+b_{j,i}^2\big) +\sum_{j>i} \big(b_{i,j}^2+b_{j,i}^2\big)\\
& = \varphi(e_i^2)+\sum_{j<i} \big(\frac{-1}{2}\varphi(e_ie_j)+\frac{1}{2}\varphi(e_ie_j)\big) +\sum_{j>i} \big(\frac{1}{2}\varphi(e_ie_j)+\frac{-1}{2}\varphi(e_ie_j)\big)\\
& = \varphi(e_i^2).
\end{align*}

So $A$ embeds in the evolution algebra $B$ with natural basis $\mathcal{B}$. But $\mathcal{B}$ is not multiplicative, so it still remains to be proven that the evolution algebra $B$ admits a multiplicative basis.

For each $k=1,\ldots,n$ define
$$\mathcal{B}_k:=\{b_{i,j}\in\mathcal{B}\,:\, \mu(i,j)=k\},$$
and let $B_k$ be the subspace of $B$ spanned by the vectors in $\mathcal{B}_k$, so 
$$B=B_1\oplus\ldots\oplus B_n.$$
Observe that by construction, $B_k^2\leq \mathbb{F}\varphi(e_k)$, and $B_kB_l=0$ if $k\ne l$. 
Therefore, for every $x\in B$, one has $xB_k\le \mathbb{F}\varphi(e_k)$, and
$$
\mathcal{S}:=\varphi(\mathcal{A})\bigsqcup \mathcal{B}
$$
is a multiplicative generating system of $B.$ 

Since $\mathcal{A}$ is a multiplicative basis of $A$, and $\varphi$ is a monomorphism, then the vectors $\varphi(\mathcal{A})$ are linearly independent. Therefore, we can select $\widetilde{\mathcal{B}}$, a linearly independent subset of $\mathcal{S}$ such that 
$$\varphi(\mathcal{A})\subseteq \widetilde{\mathcal{B}} \subseteq \mathcal{S}$$ and it is a multiplicative basis of $B$.

Finally, by construction we get that
$$\varphi(A)B \subseteq B^2 \subseteq \varphi(A)$$
hence $\varphi(A)$ is an ideal in $B$ and $B/\varphi(A)$ is degenerate
\end{proof}

\begin{remark}
If one requires the ambient evolution algebra in Theorem \ref{thm:mult2evol} to possess a basis which is simultaneously natural and multiplicative, the assertion is false in general. For example, consider  $A_n$, the algebra defined in Section~\ref{truncated}. It has basis
$\mathcal{A}_n=\{1,t,\ldots,t^{n-1}\}$, that is a multiplicative basis. The algebra is perfect, commutative, and associative. {Also, $A_n$ is not an evolution algebra by Theorem~\ref{trunc}.} 

Assume $A_n$ is embedded into a $m$-dimensional evolution algebra $E$ having a natural multiplicative basis $\mathcal{E}=\{f_s\,:\, s=1,\ldots,m\}$, satisfying:
$$ f_s^2=\lambda_s f_{\sigma(s)}.$$
Without loss of generality, we may further assume that $m$ is minimal. 

Since
$$\{f_{\sigma(s)}\,:\, \lambda_s\ne 0\}\subset \mathcal{E},$$
is a basis of $E^2$, then $E^2$ is again a finite-dimensional evolution algebra having a natural multiplicative basis, and
$$A_n=A_n^2\subset E^2.$$
Therefore, minimality implies $E=E^2$, that is, $E$ is perfect.

Finally, since $E$ is perfect, every subalgebra of $E$ is again an evolution algebra, see \cite{LP25}, but the image of $A_n$ in $E$ is a subalgebra that is not an evolution algebra, a contradiction.
\end{remark}

    The argument in the previous remark can be stated in the following general way.

    \begin{corollary}
        Let $A$ be a finite-dimensional perfect commutative algebra. If $A$ embeds into a finite-dimensional algebra $E$ admitting a natural multiplicative basis, then $A$ is an evolution algebra.
    \end{corollary}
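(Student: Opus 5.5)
The plan is to follow the argument of the preceding remark, replacing $A_n$ by an arbitrary finite-dimensional perfect commutative algebra $A$. We identify $A$ with its image in $E$. Among all finite-dimensional evolution algebras that contain $A$ and have a natural multiplicative basis, we pick one, $E$, of minimal dimension $m$. Let $\mathcal{E}=\{f_1,\dots,f_m\}$ be such a basis, with $f_sf_t=0$ for $s\neq t$ and $f_s^2=\lambda_s f_{\sigma(s)}$ for some map $\sigma$ and scalars $\lambda_s$. (I read ``algebra admitting a natural multiplicative basis'' as saying that $E$ is an evolution algebra, since a natural basis makes it one.)

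The first step is to show that $E^2$ is again such an algebra. Since $E^2$ is spanned by the squares $f_s^2$, the set $\mathcal{E}':=\{f_{\sigma(s)}:\lambda_s\neq 0\}$ is a basis of $E^2$. This holds because it is a subset of $\mathcal{E}$ with repetitions removed, so it is linearly independent. Its elements are pairwise orthogonal. Moreover, the square of each of them is a scalar multiple of an element $f_{\sigma(\sigma(s))}$, and whenever that scalar is nonzero this element lies in $\mathcal{E}'$. Thus $E^2$ is a subalgebra which is an evolution algebra, and $\mathcal{E}'$ is a natural multiplicative basis of it.

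Next we use that $A$ is perfect: $A=A^2\subseteq E^2$. By the minimality of $m$, this forces $\dim E^2=\dim E$, so $E^2=E$ and $E$ is perfect, that is, regular in the sense of \cite{LP25}.

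Finally we invoke \cite{LP25}: regular (perfect) evolution algebras are closed under taking subalgebras. Since $A$ is a subalgebra of $E$, it follows that $A$ is an evolution algebra, which is the claim.

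I expect the main obstacle to be the closure step in the first part, namely checking that $\mathcal{E}'$ really is a natural multiplicative basis of $E^2$. The linear independence needs care because $\sigma$ may identify indices; this is handled by taking the set of distinct basis vectors. Orthogonality and multiplicativity are inherited from $\mathcal{E}$, and the only thing to confirm is that nonzero products land back in $\mathcal{E}'$. Beyond this, the cited result of \cite{LP25} carries the argument, and the minimality reduction is purely a dimension count.
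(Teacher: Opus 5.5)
Your proposal is correct and is essentially the paper's own argument. The remark preceding the corollary runs the same steps for $A_n$: take a minimal-dimensional $E$ with a natural multiplicative basis, observe that $\{f_{\sigma(s)}:\lambda_s\neq 0\}$ is a natural multiplicative basis of $E^2$, use $A=A^2\subseteq E^2$ and minimality to get $E=E^2$, and conclude via \cite{LP25}. The only differences are that you argue directly rather than by contradiction, and you explicitly verify that $\mathcal{E}'$ is closed under squaring, which the paper leaves implicit.
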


%%%%%%%%%% BIBLIOGRAFIA EN BIBTEX %%%%%%%%%%%%%%

\bibliographystyle{abbrv}
\bibliography{references}

\begin{thebibliography}{10}

\bibitem{Alder-Strassen}
A.~Alder and V.~Strassen.
\newblock On the algorithmic complexity of associative algebras.
\newblock {\em Theor. Comput. Sci.}, 15:201--211, 1981.

\bibitem{idrees}
I.~Alshatnawi, C.~Costoya, and A.~Viruel.
\newblock On the grothendieck ring of finite-dimensional non-degenerate evolution algebras, 2026.
\newblock arXiv:2607.15154.

\bibitem{Arenas21}
M.~Arenas.
\newblock The space of invariant bilinear forms of the polarization algebra of a polynomial endomorphism: An approach to the problem of {A}lbert.
\newblock {\em J. Algebra}, 573:1--15, 2021.

\bibitem{BCS22}
N.~Boudi, Y.~Cabrera~Casado, and M.~Siles~Molina.
\newblock Natural families in evolution algebras.
\newblock {\em Publ. Mat., Barc.}, 66(1):159--181, 2022.

\bibitem{BMV20}
M.~D. Bustamante, P.~Mellon, and M.~V. Velasco.
\newblock Determining when an algebra is an evolution algebra.
\newblock {\em Mathematics}, 8(8), 2020.

\bibitem{CSV16}
Y.~Cabrera~Casado, M.~Siles~Molina, and M.~V. Velasco.
\newblock Evolution algebras of arbitrary dimension and their decompositions.
\newblock {\em Linear Algebra Appl.}, 495:122--162, 2016.

\bibitem{CFK24}
A.~J. {Calderón Martín}, A.~{Fernández Ouaridi}, and I.~Kaygorodov.
\newblock Non-degenerate evolution algebras.
\newblock {\em J. Pure Appl. Algebra}, 228(6):107594, 2024.

\bibitem{CN16}
A.~J. {Calderón Martín} and F.~J. {Navarro Izquierdo}.
\newblock Arbitrary algebras with a multiplicative basis.
\newblock {\em Linear Algebra Appl.}, 498:106--116, 2016.

\bibitem{Class2dim}
J.~M. Casas, M.~Ladra, B.~A. Omirov, and U.~A. Rozikov.
\newblock On evolution algebras.
\newblock {\em Algebra Colloq.}, 21(2):331--342, 2014.

\bibitem{CLTV22}
C.~Costoya, P.~Ligouras, A.~Tocino, and A.~Viruel.
\newblock Regular evolution algebras are universally finite.
\newblock {\em Proc. Am. Math. Soc.}, 150(3):919--925, 2022.

\bibitem{EL15}
A.~Elduque and A.~Labra.
\newblock Evolution algebras and graphs.
\newblock {\em J. Algebra Appl.}, 14(7):1550103, 10, 2015.

\bibitem{EL16}
A.~Elduque and A.~Labra.
\newblock On nilpotent evolution algebras.
\newblock {\em Linear Algebra Appl.}, 505:11--31, 2016.

\bibitem{EL19}
A.~Elduque and A.~Labra.
\newblock Evolution algebras, automorphisms, and graphs.
\newblock {\em Linear Multilinear Algebra}, 0(0):1--12, 2019.

\bibitem{GP25}
X.~Garc{\'i}a-Mart{\'i}nez and A.~P{\'e}rez-Rodr{\'i}guez.
\newblock A note on complete evolution algebras.
\newblock {\em Arch. Math.}, 2026.

\bibitem{HuWen26}
X.-Y. Hu and R.~Wen.
\newblock Idempotent-free non-solvable evolution algebras over {$\mathbb{C}$}, 2026.
\newblock arXiv 2609.25023.

\bibitem{2-dim}
I.~Kaygorodov and Y.~Volkov.
\newblock The variety of two-dimensional algebras over an algebraically closed field.
\newblock {\em Can. J. Math.}, 71(4):819--842, 2019.

\bibitem{LP25}
M.~Ladra and A.~P\'erez-Rodr{\'\i}guez.
\newblock Regular evolution algebras are closed under subalgebras.
\newblock {\em C. R., Math., Acad. Sci. Paris}, 363:1461--1465, 2025.

\bibitem{chinos}
S.~Sriwongsa and Y.~M. Zou.
\newblock On automorphism groups of idempotent evolution algebras.
\newblock {\em Linear Algebra Appl.}, 641:143--155, 2022.

\bibitem{Tian}
J.~P. Tian.
\newblock {\em Evolution algebras and their applications}, volume 1921 of {\em Lecture Notes in Mathematics}.
\newblock Springer, Berlin, 2008.

\bibitem{TV06}
J.~P. Tian and P.~Vojt\v{e}chovsk\'{y}.
\newblock Mathematical concepts of evolution algebras in non-{M}endelian genetics.
\newblock {\em Quasigroups Relat. Syst.}, 14(1):111--122, 2006.

\bibitem{UU19}
U.~Umirbaev.
\newblock {Polarization algebras and their relations}.
\newblock {\em J. Commut. Algebra}, 11(3):433 -- 451, 2019.

\end{thebibliography}
\end{document}